\newtheorem{theorem}{Theorem}
\newtheorem{proposition}[theorem]{Proposition}
\newtheorem{lemma}[theorem]{Lemma}
\theoremstyle{definition}
\newtheorem{remark}[theorem]{Remark}
\newtheorem{example}[theorem]{Example}
\newcommand{\PP}{\mathbb{P}}
\newcommand{\RR}{\mathbb{R}}
\newcommand{\QQ}{\mathbb{Q}}
\newcommand{\CC}{\mathbb{C} }
\newcommand{\ZZ}{\mathbb{Z}}
\newcommand{\NN}{\mathbb{N}}
\newcommand{\kk}{\kappa}
\renewcommand{\ll}{\lambda}
\title{\bf Toric Geometry of Entropic Regularization}
\author{ 
Bernd Sturmfels, Simon Telen, \\
Fran\c{c}ois-Xavier Vialard, and Max von~Renesse}
\date{}
\begin{document}
\maketitle

\begin{abstract} \noindent
Entropic regularization is a method for 
large-scale linear programming. Geometrically, one traces intersections of
the feasible polytope with scaled toric varieties, starting at the Birch point.
We compare this to log-barrier methods, with reciprocal linear~spaces, 
starting at the analytic center.
We revisit entropic regularization for unbalanced optimal transport, 
and we develop the use of optimal conic couplings.
We compute the degree of the associated toric variety, and we explore
algorithms like iterative scaling.  \end{abstract}

\section{Introduction}
\label{sec1}

Linear programming in standard form is the optimization problem
\begin{equation}
\label{eq:LP}
{\rm Minimize} \,\,\,c \cdot x \,\,\,\hbox{subject to}\, \,\,A x = b
\,\,{\rm and} \,\, x \geq 0 .
\end{equation}
Here $A$ is a nonnegative $d \times n$ matrix of rank $d$ with no zero column,
$c \in \RR^n$ is a row vector, and
$b \in \RR^d$ is a column vector.
This program is feasible if and only if $b$ lies in 
${\rm pos}(A)$, which is the convex polyhedral cone spanned by the columns of $A$.
If $c$ is fixed and generic, and $b$ ranges over ${\rm pos}(A)$, then the set of optimal bases of (\ref{eq:LP})
defines a regular triangulation of the cone ${\rm pos}(A)$. 
This  classical result due to Walkup and Wets is explained geometrically in \cite[Theorem 1.2.2]{DRS}.
The triangulation is replaced by a continuous shape under a {\em regularization}
\begin{equation}
\label{eq:entropicLP}
{\rm Minimize}\, \,\,c \cdot x \,+\, \epsilon \sum_{i=1}^n H(x_i) \,\,\,\,\hbox{subject to} \,\,\,A x = b
\,\,{\rm and} \,\, x \geq 0 .
\end{equation}
Here, $H $ is a strictly convex smooth function on $\mathbb R_{\geq 0}$, and $\epsilon$ is a  positive parameter. For interior point methods, $H$ is taken as a barrier function, meaning that its limit at $0$ is $+\infty$.
This enables us to remove the constraint $x\geq 0$ in \eqref{eq:entropicLP}.
The dual formulation of \eqref{eq:entropicLP} reads:
\begin{equation}\label{EqDualProblem}
\text{Maximize}\, \,\,\,  b \cdot p - \epsilon \sum_{i = 1}^n H^*\left(
\frac{1}{\epsilon}
\left[A^\top p - c \right]_i\right)\,\,\,
\hbox{over all} \,\,\, p \in \RR^d.
\end{equation}
Here,  $\,H^*(s) = \sup_{t \in \mathbb R}( st - H(t))\,$
denotes the {\em Legendre-Fenchel transform} of the convex function~$H$,
after the latter has been extended to all of $\mathbb R$ by setting $H(t) = +\infty$ for $t<0$.

The feasible set
$P_{A,b} := \{ x \in \RR^n_{\geq 0} : Ax = b \,\}$ for (\ref{eq:LP}) is a polytope. For every $\epsilon > 0 $, the 
regularized problem
 (\ref{eq:entropicLP}) has a unique optimal solution $x^*(\epsilon)$ in the
relative interior of $P_{A,b}$, provided the function $H$ is barrier.
The curve $\,\mathcal{C}_{A,b,c} = \{ x^*(\epsilon) \,: \, 0 \leq \epsilon \leq \infty\}$
connects the distinguished point $x^*(\infty)$ in $P_{A,b}$ to an
optimal solution $x^*(0)$ of the linear program (\ref{eq:LP}).

Applying Lagrange multipliers to \eqref{eq:entropicLP} gives a determinantal representation for $\mathcal{C}_{A,b,c}$:
\begin{equation}
\label{eq:curveconstraints}
 A x = b \qquad {\rm and} \qquad {\rm rank} \begin{pmatrix} A \\ c \\ H'(x) \end{pmatrix} \,\leq \, d+1 .
 \end{equation}
The matrix on the right has $d+2$ rows and $n$ columns.
 Its last row is the vector of derivatives
$$ H'(x) \,\,= \,\, \bigl(H'(x_1),H'(x_2),\ldots,H'(x_n) \bigr). $$
For generic cost vectors $c$, the number of independent constraints in (\ref{eq:curveconstraints}) equals
$d + (n-d-1) = n-1$, so we expect these to cut out an analytic curve in $\RR^n$.
The distinguished interior point  $x^*(\infty)$, at which our curve starts, satisfies
$ {\rm rank} \begin{pmatrix} A \\ H'(x) \end{pmatrix}  \leq d $.
For any fixed $\epsilon \in \RR_{> 0}$, the point $x^*(\epsilon)$ on the curve satisfies
$ {\rm rank} \begin{pmatrix} A \\ c + \epsilon H'(x) \end{pmatrix}  \leq d $. 
Moreover, if $H$ and $c$ satisfy certain hypotheses then the curve is algebraic, and we can 
study its defining ideal in $\RR[x_1,x_2,\ldots,x_n]$.

We compare two widely used regularizations.
The first is the {\em logarithmic barrier function} $H(t) = -{\rm log}(t)$,
where (\ref{eq:entropicLP}) is the standard formulation of an
 interior point method for (\ref{eq:LP}).
  This function is  self-concordant, which is a key property in convex optimization.
  The rank condition in (\ref{eq:curveconstraints}) translates into polynomials
by taking numerators of all maximal minors. These define an
algebraic curve $\mathcal{C}^{R,+}_{A,b,c}$ in the polytope $P_{A,b}$. This is known as the
{\em central path}. Its starting point
$x^*(\infty)$ is the {\em analytic center} of $P_{A,b}$.
The algebraic complexity of these objects are governed by
the bounded regions in certain hyperplane arrangements.
See \cite{ABGJ, DSV}.

Next consider the {\em entropy function} $H(t) = t \cdot {\rm log}(t) - t $,
whose Legendre-Fenchel transform equals $H^*(s) = {\rm exp}(s)$.
Here, (\ref{eq:entropicLP}) is the {\em entropic regularization} of (\ref{eq:LP}).
This approach is popular in machine learning, especially for 
 optimal transport problems \cite{Cuturi, KR, Weed}. 
 Note that $H(t)$ is strictly convex but not strongly convex. It is not a barrier function since 
 $H(t)$ does not diverge for $t \rightarrow 0$.
 But, its derivative does, and this ensures the minimizer $x^*(\epsilon)$ to be in the 
 relative interior of $P_{A,b}$.
  To highlight algebraic features, we assume
 that the cost vector $c$ has integer coordinates.
The rank condition in (\ref{eq:curveconstraints}) is a system of $\ZZ$-linear equations
 in ${\rm log}(x_1), \ldots, {\rm log}(x_n)$.
 These translate into
 differences of monomials in $\RR[x_1,\ldots,x_n]$.
Indeed, (\ref{eq:curveconstraints}) 
specifies the toric variety of the integer matrix $\binom{A}{c}$.
   We obtain the {\em entropic
 curve} $\mathcal{C}^T_{A,b,c}$ by
 intersecting that toric variety
 with the linear space $\{Ax = b\}$.
 Its degree is bounded by the normalized volume of a polytope
associated to $ \binom{A}{c} $, and 
 $x^*(\infty)$ is the {\em Birch point} of $P_{A,b}$.

\begin{example}[$d=4,n=6$] \label{ex:trans23}
We consider the transportation problem of format $2 \times 3$,
as in \cite[Examples 2 and 14]{DSV}. We  here represent this by a matrix
with linearly independent rows:
\begin{equation} \label{eq:A23}
A \,\, = \,\, 
\begin{pmatrix} 
1 & 1 & 1 & 0 & 0 & 0 \\
0 & 0 & 0 & 1 & 1 & 1 \\
1 & 0 & 0 & 1 & 0 & 0 \\
0 & 1 & 0 & 0 & 1 & 0 \\
\end{pmatrix}.
\end{equation}
To explore the generic behavior for this $A$, we fix
$\, b = (7,8,4,5)^\top \,$ and $\, c = (1,0,1,\,0,2,5)  $.
The transportation polytope $P_{A,b}$ is a hexagon in the affine plane
$\{Ax=b\}$ in $\RR^6$. 
We use coordinates $(x_1,x_2)$, as these determine $x_3,x_4,x_5,x_6$.
First consider its log-barrier geometry.
The edges of
$P_{A,b}$ specify an arrangement of lines in 
the plane $\{Ax = b \}$, whose complement has seven bounded regions. Therefore, the analytic center
has algebraic degree~seven:
$$ x^*(\infty) = ( 1.895889342, 2.337573614, 2.766537044 ,\, 2.104110658, 2.662426386, 3.233462956 ). $$
For generic cost vectors $c$, the central path has degree five.  Two pictures are shown in \cite[Figure 1]{DSV}.
For the specific $c$ above, the quintic polynomial defining the central path equals
$$ \begin{small}
10 x_1^4 x_2+22 x_1^3 x_2^2+8 x_1^2 x_2^3-4 x_1 x_2^4-25 x_1^4-180 x_1^3 x_2-183 x_1^2 x_2^2 + \cdots +376 x_2^2+700 x_1-280 x_2. \end{small}
$$

Now compare this to entropic regularization.
The Birch point has rational coordinates:
$$ x^*(\infty)\, =\,  \frac{1}{15}(28,35,42,\,32,40,48) \, = \,
(1.8666, 2.3333, 2.8000, \,2.1333, 2.6666, 3.2000). $$
The rank constraint in (\ref{eq:curveconstraints}) translates into the binomial equation
$\, x_1^2 x_3^3 x_5^5 =  x_2^5 x_4^2 x_6^3 $.
      The degree drops by one
when we intersect with $\{Ax=b\}$.
The entropic curve is given by
$$ \begin{small} 25 x_1^5 x_2^4+85 x_1^4 x_2^5+87 x_1^3 x_2^6+19 x_1^2 x_2^7-8 x_1 x_2^8-250 x_1^5 x_2^3-1275 x_1^4 x_2^4 + \cdots 
+1531250 x_1^2 x_2-1071875 x_1^2. \end{small} $$
As the vector $c$ ranges over $\ZZ^6$, the degree of this curve can be arbitrarily large.
For non-rational $c$, the entropic curve is no longer algebraic. This is a general feature of
toric geometry.

Note that ${\rm pos}(A)$ is the cone over a triangular prism,
and $c$ determines a triangulation of that prism into three tetrahedra.
There are six such triangulations, one for each vertex of $P_{A,b}$.
Think of the triangulation as the union of three $\PP^3$'s in $\PP^5$.
Regularization replaces the triangulation by a nearby smooth variety.
For the entropic regularization, this is a  Segre variety
$\PP^1 \times \PP^2$. For the log-barrier regularization, 
it is the reciprocal linear space 
for~$A$.
 \hfill $\diamond$   \end{example}

The  distinction between our two regularizations mirrors
that between toric geometry and matroid theory. In statistics, this is the
distinction between {\em toric models} and {\em linear models} \cite[Section 1.2]{ASCB}.
These objects are central in the study of {\em positive geometries}
in~combinatorics and physics (cf.~\cite[Section 6]{StuTel}).
 In Section~\ref{sec2} we develop a comparative theory.
After a review of known facts in Proposition \ref{prop:volume} and \ref{prop:homeo},
we present our findings in Theorem \ref{thm:degreebounds}
and~\ref{thm:homeolambda}. They concern the algebraic curves and positive varieties
arising from linear programming.

In Section~\ref{sec3} we turn to the optimal transport problem. This is
ubiquitous in data science, where entropic regularization is a method of choice \cite{Cuturi}. Indeed, in this context the entropy function is preferred over the logarithmic barrier for efficiency reasons. We will come back to this 
preference in Remark \ref{rmk:efficiency}.
Geometrically, $P_{A,b}$ is a transportation polytope, and Segre varieties regularize
triangulations of products of simplices, as seen in Example \ref{ex:trans23}.
Our contribution is an extension  of this theory to the unbalanced regime,
which was studied in \cite{CPSV, CPSV2}.
We formulate the discrete conic coupling in eqn.~(\ref{eq:UOT}), in the spirit
 of \cite{MR3763404}.

Section~\ref{sec4} is devoted to the toric geometry and combinatorics of our new variant.
The main result is a formula for the algebraic degree of conic optimal transport
 (Theorem~\ref{thm:degreeformula}).
 In Section \ref{sec5} we discuss numerical algorithms for the
entropic regularization (\ref{eq:entropicLP}).
The task is to compute the points $x^*(\epsilon)$
along the entropic curve $\mathcal{C}^T_{A,b,c}$, and
to solve (\ref{eq:LP}) by letting $\epsilon \rightarrow 0$.

\begin{remark}
After completing this paper, we learned that the usage of the term {\em entropic barrier}
varies across the literature.  There is a general definition for arbitrary convex bodies,
due to Bubeck and Eldan. When restricted to polytopes, this leads to
the logarithmic barrier and the analytic center. This connection was developed 
from the perspective of tropical geometry by
 Allamigeon et al.~in \cite{Alla}. Their entropic path agrees with the central path, arising from
 $H(t) = - {\rm log}(t)$. What we call the entropic curve arises from $H(t) = t \cdot {\rm log}(t) - t$.
Emphasizing this distinction is important, also because we are now writing a 
``nonabelian sequel'' to the present paper, namely 
on entropic regularization of semidefinite programming.
\end{remark}

\section{Varieties and Positivity}
\label{sec2}

Let $A$ be a $d \times n$ matrix  of rank $d$ with nonnegative integer entries and no zero column.
We write $L_A$ for the row space of $A$ in $\RR^n$.
We associate two affine algebraic varieties with the matrix $A$.
Both have strong positivity properties that makes them relevant for statistics and optimization.
The {\em reciprocal linear space} $R_A$ is the Zariski closure in 
$\CC^n$ of the set of points $v^{-1} = (v_1^{-1},\ldots,v_n^{-1})$
where $v$ ranges over vectors in $L_A$ whose $n$ coordinates are nonzero.
The {\em toric variety} $T_A$ is the Zariski cosure in 
$\CC^n$ of the set of points
${\rm exp}(v) = ({\rm exp}(v_1),\ldots,{\rm exp}(v_n))$
where $v$ ranges over $L_A$. Both $R_A$ and
$T_A$ are irreducible varieties of dimension $d$, defined over
the field $\QQ$ of rational numbers.
Their prime ideals  live in the polynomial ring $\QQ[x_1,\ldots,x_n]$.

The prime ideal of $R_A$ has a distinguished universal
Gr\"obner basis. It consists of the circuit polynomials.
A circuit of $A$ is a non-zero vector $u$ of minimal support in ${\rm kernel}(A)$,
assumed to have relatively prime integer coordinates. The corresponding
circuit polynomial is the numerator of the rational function $\sum_{i=1}^n u_i/x_i$.
This is due to Proudfoot and Speyer (cf.~\cite[Proposition 12]{DSV}).
The prime ideal of $T_A$ is a toric ideal. It is generated by binomials
$$ x^{u_+} - x^{u_-}  \,\,\, = \,\,
\prod_{i: u_i > 0} \! x_i^{u_i} \,\,-\, \prod_{j: u_j < 0}  \! x_j^{-u_j}, $$
where $u$ runs over a finite set of integer vectors in ${\rm kernel}(A)$.
This set is known  in statistics as a {\em Markov basis} for the matrix $A$.
Here it usually does not  suffice to consider only circuits.

We record the well-known formulas for the degrees of our two $d$-dimensional varieties. 
In what follows we use the notation ${\rm conv}(A) \subset \mathbb{R}^d$ for the convex hull of the columns of $A$ viewed as points in $\mathbb{R}^d$, and ${\rm conv}(A \cup 0) \subset \mathbb{R}^d$ for the convex hull of ${\rm conv}(A)$ and the origin. 

\begin{proposition} \label{prop:volume}
The degree of the reciprocal linear space $R_A$ is the
M\"obius number of the rank $d$ matroid defined by the matrix $A$.
This is bounded above by $\binom{n-1}{d-1}$, with equality when
all $d \times d$ minors of $A$ are non-zero.
The degree of the toric variety $T_A$ equals the
normalized volume of the lattice polytope
${\rm conv}(A \cup 0)$. There is no upper bound in terms
of $d$ and $n$.
\end{proposition}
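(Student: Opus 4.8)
The plan is to treat the reciprocal linear space $R_A$ and the toric variety $T_A$ separately; in both cases the degree equals the number of points in which the variety meets a generic affine-linear subspace of complementary dimension $n-d$, and the real work is to identify that number combinatorially.

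For $T_A$, write a general vector of $L_A$ as $A^\top y$ with $y\in\CC^d$; then $\exp$ becomes the monomial map $\psi\colon(\CC^*)^d\to\CC^n$, $t\mapsto(t^{a_1},\dots,t^{a_n})$, where $a_1,\dots,a_n\in\ZZ^d$ are the columns of $A$, so that $T_A=\overline{\psi((\CC^*)^d)}$. Intersecting $T_A$ with $d$ generic affine hyperplanes $\sum_i\lambda_{ji}x_i=\mu_j$ and pulling back along $\psi$ gives the system $\sum_i\lambda_{ji}t^{a_i}=\mu_j$, $j=1,\dots,d$: a family of $d$ Laurent polynomials, each with Newton polytope $Q:={\rm conv}(A\cup 0)$, the origin entering through the constant term $\mu_j$. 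By Kushnirenko's theorem (the equal-Newton-polytope case of the Bernstein--Kushnirenko theorem) the number of solutions in $(\CC^*)^d$, counted with multiplicity, equals $d!\,{\rm vol}(Q)$, the normalized volume of $Q$; for a generic slice no solution lies on a coordinate subspace, and $\psi$ is birational onto its image (equivalently the columns of $A$ generate $\ZZ^d$, which we may assume; in general one normalizes ${\rm vol}$ with respect to the lattice $\ZZ A$), so this count is $\deg T_A$. For the last assertion it suffices to let $N\to\infty$ in the case $d=1$, $n=2$, $A=(1,N)$: here $T_A=\{x_2=x_1^N\}$ has degree $N={\rm Vol}([0,N])$.

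For $R_A$, a generic slice meets it in the points $v\in L_A$ with all $v_i\neq 0$ and $\sum_i\lambda_{ji}/v_i=\mu_j$; substituting $v=A^\top y$ turns this into $\sum_i\lambda_{ji}/(a_i\cdot y)=\mu_j$, a system of $d$ rational equations in $y\in\CC^d$ whose denominator locus is the hyperplane arrangement $\bigcup_i\{a_i\cdot y=0\}$. Clearing denominators and discarding the excess intersection along that arrangement and at infinity leaves a count depending only on the matroid $M=M(A)$, which is loopless since $A$ has no zero column. Here I would invoke the theorem of Orlik--Terao and of Proudfoot--Speyer: the Orlik--Terao algebra (the broken-circuit ring of Proudfoot--Speyer) is the homogeneous coordinate ring of $R_A$, and reading off its Hilbert series gives $\deg R_A=(-1)^d\chi_M(0)=|\mu(\hat 0,\hat 1)|$, the M\"obius number of the lattice of flats of $M$ (cf.~\cite{DSV}).

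To obtain the bound, use that $|\mu(M)|$ equals the number of bases of $M$ containing no broken circuit, for any fixed linear order on $[n]$. If $B$ is a basis with $1\notin B$, then $B\cup\{1\}$ contains a circuit $C$, necessarily with $1\in C$ (else $C\subseteq B$), so $C\setminus\{1\}=C\setminus\min(C)\subseteq B$ is a broken circuit; hence every non-broken-circuit basis contains the minimal element, and there are at most $\binom{n-1}{d-1}$ such $d$-subsets. Equality then forces every $d$-subset containing the order's minimal element to be a basis, and since $|\mu(M)|$ is order-independent and the minimal element may be any of the $n$ elements, every $d$-subset is a basis, i.e.\ $M=U_{d,n}$, which is precisely the condition that all $d\times d$ minors of $A$ are nonzero; conversely, for $U_{d,n}$ a direct count shows that the $\binom{n-1}{d-1}$ $d$-subsets containing the minimal element are exactly the non-broken-circuit bases. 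The routine parts are the two pullbacks and the volume count; the main obstacle is the reciprocal case---turning the excess-intersection count, for which a naive B\'ezout estimate $n^d$ is hopelessly loose, into the clean M\"obius number. That is where the Orlik--Terao/Proudfoot--Speyer description of the coordinate ring carries the weight; a self-contained alternative would instead run through Zaslavsky-style counting of bounded regions of a generic real slice of the arrangement.
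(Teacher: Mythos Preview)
Your proposal is correct and aligns with the paper's own treatment: the paper does not give a self-contained proof but simply refers to the Proudfoot--Speyer result (already stated just above the proposition) for $\deg R_A$ and to ``any textbook on toric geometry'' for $\deg T_A$. Your Kushnirenko argument for $T_A$ and your invocation of the Orlik--Terao/Proudfoot--Speyer coordinate ring for $R_A$ are precisely the standard routes those references would take, so there is no genuine divergence.

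Where you go beyond the paper is in the bound $\binom{n-1}{d-1}$ and its equality case: the paper says nothing about this beyond the statement (implicitly deferring to \cite{DSV}), whereas you supply a clean self-contained argument via non-broken-circuit bases. That argument is sound --- every NBC basis must contain the minimal element, giving the bound, and order-independence of $|\mu(M)|$ forces $M=U_{d,n}$ at equality. This is a nice addition; the paper's reader would otherwise have to chase it down in the cited literature.
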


We refer to \cite[Section 3]{DSV} for the definition of
the M\"obius number. The fact that it gives the degree of $R_A$
follows from the result of Proudfoot and Speyer stated above.
The formula for the degree of an affine toric variety can be found in any
textbook on toric geometry.
For both varieties,
consider the semialgebraic set of points with nonnegative real coordinates:
\begin{equation}
\label{eq:semialgsets}
R^+_A \,:= \,  R_A \,\cap \, \RR^n_{\geq 0} \qquad {\rm and} \qquad
T^+_A \,:= \,  T_A \,\cap \, \RR^n_{\geq 0}.
\end{equation}
Our hypotheses on $A$ ensure that these sets are Zariski dense in $R_A$ and $T_A$
respectively, so they have dimension $d$ as well.
We now identify $A$ with the linear map $\RR^n \rightarrow \RR^d, \,v \mapsto A v$.

\begin{proposition} \label{prop:homeo}
Restricting the linear map $A$ to the two sets in (\ref{eq:semialgsets})
defines homeomorphisms
\begin{equation}
\label{eq:homeo}
R^+_A \,\simeq \,{\rm pos}(A) \qquad {\rm and} \qquad T^+_A \,\simeq \,{\rm pos}(A). 
\end{equation}
The inverse map from  the polyhedral cone on the right to
the positive variety $R^+_A$ 
resp.~$T^+_A$ on the left
takes $\,b \in {\rm pos}(A)\,$ to the analytic center resp.~Birch point of the polytope $P_{A,b}$.
\end{proposition}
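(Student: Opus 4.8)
The plan is to treat the two homeomorphisms in parallel, since each follows the same three-step template: identify the positive variety with the image of a smooth parametrization, show that composing with $A$ gives a bijection, and upgrade bijectivity to homeomorphism by a properness/continuity argument. First I would recall the parametrizations. For the toric side, $T^+_A$ is the closure of $\{\exp(v): v\in L_A\}\cap\RR^n_{\geq 0}$, and since $A$ has nonnegative integer entries and no zero column, every point of the open part is $x = \exp(A^\top p)$ for some $p\in\RR^d$; then $Ax = A\exp(A^\top p)$. The map $p\mapsto A\exp(A^\top p)$ is exactly the gradient of the strictly convex function $p\mapsto \mathbf 1^\top\exp(A^\top p)$ (this is the dual objective \eqref{EqDualProblem} with $c=0$, $\epsilon=1$, $H^* = \exp$), so it is injective, and its image is the interior of ${\rm pos}(A)$. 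That the unique preimage of $b\in{\rm int}\,{\rm pos}(A)$ is the Birch point is the definition of $x^*(\infty)$ via the rank condition ${\rm rank}\binom{A}{H'(x)}\le d$ with $H'(x)=\log x$, i.e. $\log x\in L_A$, which is precisely $x\in T^+_A$. For the reciprocal side, $R^+_A$ is parametrized by $v\mapsto v^{-1}$ for $v\in L_A$ with positive coordinates, and $A v^{-1}$ is the gradient of the log-barrier dual; the unique preimage of $b$ is the analytic center of $P_{A,b}$, again by the rank condition in \eqref{eq:curveconstraints} with $H'(x)=-1/x$.

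The core of the argument is therefore the bijectivity of the two maps $R^+_A\to{\rm pos}(A)$ and $T^+_A\to{\rm pos}(A)$, which I would get from convex duality: for fixed $b\in{\rm int}\,{\rm pos}(A)$, the primal problem \eqref{eq:entropicLP} with $c=0$, $\epsilon=1$ has a unique minimizer $x^*(b)$ in the relative interior of $P_{A,b}$ (strict convexity of $H$ on the affine slice plus coercivity from the barrier/derivative-blowup property, as stated in the text following \eqref{EqDualProblem}), and the KKT conditions say exactly that $\log x^*(b)\in L_A$ (entropic case) or $-1/x^*(b)\in L_A$ (log case). So the inverse map $b\mapsto x^*(b)$ is well-defined and is a two-sided inverse to $A|_{R^+_A}$ resp. $A|_{T^+_A}$ on the interior; the boundary points of ${\rm pos}(A)$ are handled by the Zariski-density remark and a limiting argument, or by directly noting that $P_{A,b}$ for boundary $b$ still has a Birch point/analytic center in its relative interior because the relevant face is again of the required form.

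For the topological upgrade, I would argue that $A|_{T^+_A}$ and $A|_{R^+_A}$ are continuous (restrictions of a linear map), bijective onto ${\rm pos}(A)$, and proper: a sequence in $T^+_A$ (resp. $R^+_A$) escaping to infinity must have $Ax$ escaping to infinity, because on the positive variety no coordinate can blow up while $Ax$ stays bounded — here I use that $A$ has no zero column, so every coordinate $x_i$ appears with positive coefficient in some entry of $Ax$, and that on $T^+_A$ the coordinates satisfy the binomial constraints which prevent one coordinate from vanishing while another blows up along $Ax$ bounded (similarly the circuit relations on $R^+_A$). A continuous proper bijection between locally compact Hausdorff spaces is a homeomorphism, which finishes the proof. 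The main obstacle I anticipate is precisely this properness/boundary bookkeeping: making rigorous that the parametrizations extend continuously to the boundary of ${\rm pos}(A)$ and that no mass escapes to infinity. I expect the cleanest route is to invoke the known structure of these positive varieties (toric: the moment-map picture; reciprocal: the matroid stratification) rather than arguing from scratch, citing \cite{DSV} for the reciprocal case and the standard moment-map homeomorphism for the toric case, so that the proof reduces to checking that the statistics named (Birch point, analytic center) are the correct fibers — which is the KKT computation above.
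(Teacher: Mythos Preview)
Your proposal is correct and follows essentially the same strategy as the paper: construct the inverse map $b\mapsto x^*(\infty)$ via the convex optimization problem from the Introduction, verify via the KKT/rank condition that its image is $T^+_A$ (resp.~$R^+_A$), and observe that $A\cdot x^*(\infty)=b$ makes this a two-sided inverse to $A$. The paper's proof is much terser than yours---it simply asserts that the inverse is ``well-defined and algebraic'' and declares this sufficient for the homeomorphism, whereas you spell out a properness argument and worry (appropriately) about boundary behavior; your extra care is not wasted, but be aware that the paper does not engage with those details.
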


\begin{proof}
For each scenario, 
consider the map that takes $b \in {\rm pos}(A)$ to the point
$x^*(\infty)$  in $P_{A,b}$. This was defined in the Introduction as
the solution to a convex optimization problem whose critical equations are polynomials.
The map is well-defined and algebraic in both cases. The image 
equals $T^+_A$ resp.~$R^+_A$. Furthermore, we have
$A \cdot x^*(\infty) = b$, so the composition with the linear map $A$ is the identity on ${\rm pos}(A)$.
This gives the desired homeomorphisms in (\ref{eq:homeo}).
\end{proof}

We now fix a sufficiently generic  vector $c \in \ZZ^n$ that serves
as  cost function in the linear program~(\ref{eq:LP}).
We augment the matrix $A$ by the row $c$ to obtain
 a $(d+1) \times n$ matrix $\binom{A}{c}$. This has rank $d+1$, since $c$ is generic.
Let $R_{\binom{A}{c}}$ be the associated reciprocal variety, and
let $T_{\binom{A}{c}}$ be the associated toric variety. Both of
these  live in $\CC^n$, and they have dimension $d+1$.
Propositions~\ref{prop:volume} and \ref{prop:homeo}
hold for these varieties, with $A$ replaced by $\binom{A}{c}$.
We note that $R_{\binom{A}{c}}$ was called the {\em central sheet}
in \cite{DSV}. Its degree  was computed 
in \cite[Theorem 11]{DSV}: it is the
M\"obius number $|\mu(A,c)|$.
By contrast, Proposition
\ref{prop:volume} refers to the M\"obius number $|\mu(A)|$.

 The toric variety $T_{\binom{A}{c}}$ is the total space
of the Gr\"obner degeneration of $T_A$  given by~$c$, as in \cite[Section 9.4]{DRS}.
The degree of $T_{\binom{A}{c}}$ is the normalized volume
of the convex hull of the $n$ columns of $\binom{A}{c}$ together with the origin in $\RR^{d+1}$.
This volume is a subtle invariant which  incorporates both geometric and arithmetic properties of 
the integer entries of $A$ and $c$.

\begin{example}[$d=2,n=4$] We consider the matrix 
$A \, = \,\begin{small} \begin{pmatrix} 3 & \! 2 & \! 1 & \! 0 \\
0 &  1 \! & \! 2 & \! 3  \end{pmatrix} \end{small} $.
In our set-up, $T_A$ is a toric surface in $\CC^4$, namely the cone over the  {\em twisted cubic curve}.
Its prime ideal is $ \langle x_1 x_3 - x_2^2, x_1 x_4 - x_2 x_3, x_2 x_4-x_3^2 \rangle$.
The reciprocal surface $R_A$ happens to be isomorphic to $T_A$.
Its prime ideal is
$
\langle 
x_1 x_2-3 x_1 x_4+2 x_2 x_4,
2 x_1 x_3-3 x_1 x_4+x_3 x_4,
x_2 x_3-2 x_2 x_4+x_3 x_4
\rangle
$.

We now augment $A$ by the cost vector $c = (c_1,c_2,c_3,c_4)$.
The resulting varieties are
 hypersurfaces in $\CC^4$.
The reciprocal variety $R_{\binom{A}{c}}$ is the affine cubic threefold defined by
\begin{equation}
\label{eq:rec3fold}
\frac{x_1x_2x_3x_4}{3} \cdot {\rm det} \begin{pmatrix} A \\ c \\ x^{-1} \end{pmatrix} \quad =  \quad
\begin{matrix}
\,(c_1-3 c_3+2 c_4) x_1 x_3 x_4 \,+\, (c_1-2 c_2+c_3)x_1x_2 x_3 \\
-(c_2-2c_3+c_4) x_2 x_3 x_4 \, \,- (2 c_1-3 c_2+c_4) x_1 x_2 x_4 
. \end{matrix}
\end{equation}
The toric variety $T_{{\binom{A}{c}}}$ is an affine threefold in $\CC^4$, defined by an
irreducible binomial such~as
\begin{equation}
\label{eq:tor3fold}
x_2^{c_1-3 c_3+2 c_4}\, x_4^{c_1-2 c_2+c_3}
 \,\,\,-\,\,\,
x_1^{c_2-2c_3+c_4} \,x_3^{2 c_1-3 c_2+c_4}.
\end{equation}
The coefficients in (\ref{eq:rec3fold}) are the
exponents in (\ref{eq:tor3fold}). The equation (\ref{eq:tor3fold}) is correct if and only if
these exponents are relatively prime and nonnegative. In that case
the degree of $T_{\binom{A}{c}}$  equals
$2(c_1 -  c_2 -  c_3 +  c_4)$. Thus the degree depends on sign conditions and divisibilities in $c$.
 \hfill $\diamond$  \end{example}

We now define the curves of interest in linear programming
by intersecting our varieties with the affine-linear spaces $\{x \in \CC^n:Ax = b\}$,
for $b \in \RR^d$. The resulting curves are denoted
\begin{equation}
\label{eq:RTcurves}
\mathcal{C}^R_{A,b,c} \,= \, R_{\binom{A}{c}} \,\cap \,\{x :Ax = b\}
\qquad {\rm and} \qquad \mathcal{C}^T_{A,b,c} \,= \, T_{\binom{A}{c}} \,\cap \,\{x :Ax = b\}. \qquad
\end{equation}

\begin{theorem}  \label{thm:degreebounds} For generic  vectors $b \in \mathbb{R}^d$ and $c \in \mathbb{Z}^n$, the intersections
in (\ref{eq:RTcurves}) are curves in~$\CC^n$,
namely the central curve and the entropic curve of the LP (\ref{eq:LP}). Their degrees satisfy
\begin{equation}
\label{eq:degreebounds}
 \begin{matrix} {\rm degree}(\mathcal{C}^R_{A,b,c}) \,=\, |\mu(A,c)| \,\leq\, \binom{n-1}{d} \quad {\rm and} \quad
 {\rm degree}(\mathcal{C}^T_{A,b,c}) \,\, \leq \,\, {\rm vol}( {\rm conv}(\binom{A}{c} \cup 0)). 
 \end{matrix} 
 \end{equation}
\end{theorem}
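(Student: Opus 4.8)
The plan is to deduce the theorem from the degree formulas of Proposition~\ref{prop:volume}, applied to the augmented matrix $\binom{A}{c}$, together with two geometric facts: that the intersections in~(\ref{eq:RTcurves}) have dimension one for generic $b$, and that the reciprocal section preserves degree while the toric section can only lose it. First I would record that these two curves really are the central curve and the entropic curve of~(\ref{eq:LP}); this is built into the rank description~(\ref{eq:curveconstraints}). For the logarithmic barrier $H(t)=-\log t$, where the last row of the matrix in~(\ref{eq:curveconstraints}) is $(-1/x_1,\dots,-1/x_n)$, clearing denominators in the maximal minors says exactly that $1/x$ lies in the row space of $\binom{A}{c}$, i.e.\ it cuts out the central sheet $R_{\binom{A}{c}}$; intersecting with $\{Ax=b\}$ gives the central path $\mathcal{C}^{R,+}_{A,b,c}$ and its Zariski closure $\mathcal{C}^R_{A,b,c}$. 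For the entropy $H(t)=t\log t-t$, where the last row is $(\log x_1,\dots,\log x_n)$ and $c\in\ZZ^n$, the rank condition says $\log x$ lies in the row space of $\binom{A}{c}$, equivalently $x$ satisfies the toric binomials of $\binom{A}{c}$; intersecting with $\{Ax=b\}$ gives the entropic curve $\mathcal{C}^T_{A,b,c}$.

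Next I would check that both intersections are one-dimensional for generic $b$. Restricting the linear map $A\colon\CC^n\to\CC^d$ to $R_{\binom{A}{c}}$, respectively $T_{\binom{A}{c}}$, yields a dominant morphism: by Proposition~\ref{prop:homeo} applied to $\binom{A}{c}$, the image contains $A(R^+_{\binom{A}{c}})$, respectively $A(T^+_{\binom{A}{c}})$, and this equals the image of ${\rm pos}(\binom{A}{c})$ under deleting the last coordinate, namely ${\rm pos}(A)$, which is full-dimensional in $\RR^d$ since $A$ has rank $d$; hence the image is Zariski dense in $\CC^d$. As $R_{\binom{A}{c}}$ and $T_{\binom{A}{c}}$ are irreducible of dimension $d+1$, the generic fiber of each morphism is equidimensional of dimension one, so over a Zariski-dense open set of $b\in\CC^d$ the fibers $\mathcal{C}^R_{A,b,c}$ and $\mathcal{C}^T_{A,b,c}$ are curves; a sufficiently generic real $b$ lies in this set.

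For the degrees I would argue separately. The central sheet has $\deg R_{\binom{A}{c}}=|\mu(A,c)|$ by \cite[Theorem 11]{DSV} (equivalently, Proposition~\ref{prop:volume} for the rank-$(d+1)$ matrix $\binom{A}{c}$); for generic $b$ the section of a reciprocal linear space by $\{Ax=b\}$ keeps the degree of the space---this is what the degree computation of the central curve in \cite{DSV} amounts to---so $\deg\mathcal{C}^R_{A,b,c}=|\mu(A,c)|$, and the M\"obius-number bound of Proposition~\ref{prop:volume}, applied to the $n$ columns of $\binom{A}{c}$, gives $|\mu(A,c)|\le\binom{n-1}{d}$. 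On the toric side, Proposition~\ref{prop:volume} for $\binom{A}{c}$ gives $\deg T_{\binom{A}{c}}={\rm vol}({\rm conv}(\binom{A}{c}\cup 0))$; passing to projective closures in $\PP^n$ and applying B\'ezout to the one-dimensional section shows $\deg\mathcal{C}^T_{A,b,c}\le\deg T_{\binom{A}{c}}$, which is the asserted inequality.

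I expect the crux to be the behavior at infinity, which is precisely what separates the two cases. The affine spaces $\{Ax=b\}$ form a parallel family, so their projective closures all contain the fixed subspace $\PP(\ker A)$ in the hyperplane at infinity; the degree of the section drops exactly when the projective closure of $R_{\binom{A}{c}}$, respectively $T_{\binom{A}{c}}$, meets $\PP(\ker A)$ in dimension one. For a reciprocal linear space this does not occur---this is the geometric reason behind the degree of the central curve in \cite{DSV}, and ultimately a consequence of the good compactification properties of reciprocal linear spaces, as reflected in the circuit universal Gr\"obner basis---so one has equality. For a toric variety it can occur, leaving only an inequality; Example~\ref{ex:trans23} exhibits a drop by one.
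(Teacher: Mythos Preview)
Your proposal is correct and follows essentially the same route as the paper's proof: invoke the result from \cite{DSV} for the degree of the central curve, and apply B\'ezout's Theorem together with the volume formula for $\deg T_{\binom{A}{c}}$ to get the toric inequality. The paper's version is terser---it cites \cite[Theorem~13]{DSV} directly for $\deg(\mathcal{C}^R_{A,b,c})=|\mu(A,c)|$ rather than going through the central sheet degree \cite[Theorem~11]{DSV} and then arguing that a generic linear section preserves it---but your added discussion of the dimension count and of the behavior at infinity is sound and in fact anticipates the mechanism behind Proposition~\ref{prop:drop}.
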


\begin{proof}
The formula for the degree of the central curve $\mathcal{C}^R_{A,b,c}$
appears in \cite[Theorem 13]{DSV}. The upper bound is attained when all
maximal minors of the matrix $\binom{A}{c}$ are non-zero.
The entropic curve $\mathcal{C}^T_{A,b,c}$ is the
intersection of the toric variety $T_{\binom{A}{c}}$ with  $\{x: Ax=b\}$.
The degree of $T_{\binom{A}{c}}$ equals 
${\rm vol}( {\rm conv}(\binom{A}{c} \cup 0))$. Hence the inequality follows from B\'ezout's Theorem.
This inequality can be strict, even when $b$ and $c$ are generic. 
  See Proposition \ref{prop:drop}.
\end{proof}

\begin{remark} \label{rmk:trivial}
If $n = d+1$  then Theorem \ref{thm:degreebounds} is trivial because $R_{\binom{A}{c}} = T_{\binom{A}{c}} = \CC^n$.
Note that 
$P_{A,b}$ is a line segment. The curves are straight lines, and all  numbers in 
(\ref{eq:degreebounds}) are equal to~$1$. Indeed, the normalized volume of a simplex in the lattice generated by its vertices equals 1.
\end{remark}

For applications in linear programming, we restrict our curves to the positive orthant:
\begin{equation}
\label{eq:RTcurves+}
\mathcal{C}^{R,+}_{A,b,c} \,= \, R^+_{\binom{A}{c}} \,\cap \,\{x :Ax = b\}
\qquad {\rm and} \qquad \mathcal{C}^{T,+}_{A,b,c} \,= \, T^+_{\binom{A}{c}} \,\cap \,\{x :Ax = b\}. \qquad
\end{equation}
These are real algebraic curves inside the polytope $P_{A,b}$.
Following \cite{DSV},
we call $\mathcal{C}^{R,+}_{A,b,c}$ the {\em central path} of the linear program (\ref{eq:LP}),
and we call $\mathcal{C}^{T,+}_{A,b,c}$ the {\em entropic path} of (\ref{eq:LP}).
A slight distinction to \cite{ABGJ, DSV} is that our central path
travels from the vertex of $P_{A,b}$ where $c$ is minimized to the vertex where
$c$ is maximized, passing through the analytic center of $P_{A,b}$.
For instance, Figure 1 in \cite{DSV} shows all real points on the central curve. The central
path is the piece inside the shaded hexagon $P_{A,b}$. That diagram
illustrates the transportation problem in Example~\ref{ex:trans23}.

We now come to the parametrizations of our curves. These are understood
by introducing scaled versions of the varieties $R_A$ and $T_A$.
We fix a cost vector $c \in \RR^n$ which is generic in the sense that
(\ref{eq:LP}) has a unique optimal solution for all $b \in {\rm pos}(A)$.
Let $\epsilon$ be a positive real parameter, also assumed to be fixed for now.
We consider the scaling  $\frac{1}{\epsilon} c $ of the cost vector~$c$.

Fix the affine-linear subspace $\,L_A - \frac{1}{\epsilon} c\,$ of $\RR^n$.
The {\em reciprocal affine space} $R_{A,c,\epsilon}$ 
 is the Zariski closure in 
$\CC^n$ of the set of points $v^{-1} = (v_1^{-1},\ldots,v_n^{-1})$
where $v$ ranges over vectors in $L_A-\frac{1}{\epsilon}c\,$ whose $n$ coordinates are nonzero.
The {\em scaled toric variety} $T_{A,c,\epsilon}$ is the Zariski cosure in 
$\CC^n$ of the set of points ${\rm exp}(v) = ({\rm exp}(v_1),\ldots,{\rm exp}(v_n))$
where $v$ ranges over $L_A - \frac{1}{\epsilon}c$. 

Both $R_{A,c,\epsilon}$ and
$T_{A,c,\epsilon}$ are irreducible affine varieties of dimension $d$. They are defined
over appropriate subfields of the real numbers $\RR$, namely the field
$\QQ(\epsilon)$ for $R_{A,c,\epsilon}$, and the field
$\QQ(z)$ for $T_{A,c,\epsilon}$, where  $z = {\rm exp}(-1/\epsilon)$.
If  we abbreviate
$z^c =  (z^{c_1},z^{c_2},\ldots,z^{c_n})$, then
\begin{equation}
\label{eq:zcTA}
 T_{A,c,\epsilon} \,\, = \,\, z^{c} \star T_A .  
 \end{equation}
Here $\star$ denotes the Hadamard product, so $T_{A,c,\epsilon}$ is a torus
translate of our toric variety $T_A$.
We now present a generalization of
Proposition~\ref{prop:homeo}, pertaining to
  the nonnegative varieties
  \begin{equation}
\label{eq:semialgsetslambda}
R^+_{A,c,\epsilon} \,:= \,  R_{A,c,\epsilon} \,\cap \, \RR^n_{\geq 0} \qquad {\rm and} \qquad
T^+_{A,c,\epsilon} \,:= \,  T_{A,c,\epsilon} \,\cap \, \RR^n_{\geq 0}.
\end{equation}
These sets are Zariski dense in $R_{A,c,\epsilon}$ and  $T_{A,c,\epsilon}$
respectively, so they have dimension $d$.

\begin{theorem} \label{thm:homeolambda}
Restricting the linear map $A$ to the two sets in (\ref{eq:semialgsetslambda})
defines homeomorphisms
\begin{equation}
\label{eq:homeoq}
R^+_{A,c,\epsilon} \,\simeq \,{\rm pos}(A) \qquad {\rm and} \qquad T^+_{A,c,\epsilon} \,\simeq \,{\rm pos}(A). 
\end{equation}
The inverse map from  the polyhedral cone on the right to
the positive variety on the left
takes $\,b \in {\rm pos}(A)\,$ to the optimal point $\,x^*(\epsilon)\,$ of
(\ref{eq:entropicLP}), where $\,H(t) = {\rm log}(t)$ resp.~$H(t) = t  \cdot{\rm log}(t) - t$.
For $\epsilon \rightarrow 0$, the homeomorphism approaches the
regular triangulation of ${\rm pos}(A)$ given by~$c$.
\end{theorem}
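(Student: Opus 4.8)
The plan is to treat the two cases in parallel, exploiting the fact that both $x^*(\epsilon)$ (for the respective $H$) are defined as the unique solution of a strictly convex program on the polytope $P_{A,b}$, so that the map $b \mapsto x^*(\epsilon)$ is a well-defined function $\mathrm{pos}(A) \to \RR^n_{\geq 0}$. First I would verify that this map lands in the claimed positive variety. For the entropic case $H(t) = t\log t - t$, the first-order conditions give $\log(x^*(\epsilon)) = \frac{1}{\epsilon}(A^\top p - c)$ for some $p \in \RR^d$, i.e.\ $x^*(\epsilon) = \exp(v)$ with $v \in L_A - \frac{1}{\epsilon}c$; this is exactly a point of $T^+_{A,c,\epsilon}$ by the defining description of that variety, and \eqref{eq:zcTA} makes the torus-translate picture explicit. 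For the log-barrier case $H(t) = -\log t$, the stationarity condition reads $-1/x^*(\epsilon) = \frac{1}{\epsilon}(A^\top p - c)$, so $x^*(\epsilon)$ is the coordinatewise reciprocal of a vector in $L_A - \frac{1}{\epsilon}c$ with nonzero coordinates, hence a point of $R^+_{A,c,\epsilon}$. In both cases $A \cdot x^*(\epsilon) = b$ is the feasibility constraint, so $A$ composed with $b \mapsto x^*(\epsilon)$ is the identity on $\mathrm{pos}(A)$.

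The next step is to show $A$ restricted to $R^+_{A,c,\epsilon}$ (resp.\ $T^+_{A,c,\epsilon}$) is a bijection onto $\mathrm{pos}(A)$, with $b \mapsto x^*(\epsilon)$ as its two-sided inverse. Surjectivity follows from the previous paragraph. For injectivity: if $x$ lies in the positive variety with $Ax = b$, then $x$ satisfies the stationarity equations for some multiplier $p$, hence $x$ is a critical point of the strictly convex objective on the relative interior of $P_{A,b}$; by strict convexity the critical point is unique, so $x = x^*(\epsilon)$. This is essentially the argument of Proposition~\ref{prop:homeo}, now applied with the cost term present and with the subspace $L_A$ replaced by $L_A - \frac{1}{\epsilon}c$. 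Continuity of $b \mapsto x^*(\epsilon)$ in both directions is standard (the implicit function theorem applied to the smooth, nondegenerate critical equations, or a direct $\Gamma$-convergence/maximum-theorem argument), giving the homeomorphisms \eqref{eq:homeoq}.

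For the final sentence, the claim is that as $\epsilon \to 0$ the homeomorphisms $T^+_{A,c,\epsilon} \simeq \mathrm{pos}(A)$ (and likewise for $R$) converge, in an appropriate pointwise sense on the interiors of the maximal cells, to the piecewise-linear identification of $\mathrm{pos}(A)$ with the regular triangulation $\mathcal{T}_c$ determined by $c$. Here I would invoke the standard fact (as in \cite[Theorem 1.2.2]{DRS}, combined with the toric-degeneration description \cite[Section 9.4]{DRS} already referenced for $T_{\binom{A}{c}}$) that the optimal basis of the unregularized LP \eqref{eq:LP} at a generic $b \in \mathrm{pos}(A)$ is precisely the cell of $\mathcal{T}_c$ containing $b$, and that $x^*(\epsilon) \to x^*(0)$ as $\epsilon \to 0$, with $x^*(0)$ the optimal vertex of $P_{A,b}$ — equivalently, the support of $x^*(0)$ is the column set indexing that cell. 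Thus for $b$ in the interior of a maximal cell $\sigma \in \mathcal{T}_c$, the limiting point $x^*(0)$ is supported on $\sigma$ and its nonzero coordinates are the unique solution of the square system $A_\sigma x_\sigma = b$; this is exactly the linear chart of $\mathrm{pos}(A)$ associated to the triangulation. The main obstacle is making ``the homeomorphism approaches the triangulation'' precise and uniform: one must control the limit $\lim_{\epsilon\to 0} x^*(\epsilon)$ near the walls of $\mathcal{T}_c$, where coordinates are transitioning between $0$ and positive values, and this is where the self-concordance of $-\log t$ (resp.\ the steepness of $\log t$ at $0$ for the entropy) and the genericity of $c$ are used to rule out degenerate behavior. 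A clean way to finish is to phrase the convergence cellwise on the open maximal cells, where the critical equations converge uniformly on compacta and the implicit function theorem gives convergence of the solution maps, and to note that the lower-dimensional skeleton of $\mathcal{T}_c$ has measure zero, so the limiting map is the triangulation-induced homeomorphism almost everywhere.
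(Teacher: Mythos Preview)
Your argument for the homeomorphism part is essentially the paper's: strict convexity forces the optimal point $x^*(\epsilon)$ to be the unique critical point in $P_{A,b}$, the first-order conditions are precisely the equations cutting out $R_{A,c,\epsilon}$ resp.\ $T_{A,c,\epsilon}$, and $A x^*(\epsilon)=b$ shows that $b\mapsto x^*(\epsilon)$ inverts $A$. You are more explicit than the paper about writing out the KKT conditions and about invoking the implicit function theorem for continuity, but the logical skeleton is identical.

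The one genuine difference is in how the limit $\epsilon\to 0$ is phrased. The paper argues geometrically: the semialgebraic sets $R^+_{A,c,\epsilon}$ and $T^+_{A,c,\epsilon}$ themselves converge, in the Hausdorff metric, to the union of those coordinate $d$-planes of $\RR^n_{\geq 0}$ indexed by optimal bases, and $A$ identifies this fan piecewise-linearly with the triangulated cone ${\rm pos}(A)$. You instead track the inverse map $b\mapsto x^*(\epsilon)$ pointwise and argue cellwise convergence on the interiors of maximal cells of $\mathcal{T}_c$, invoking $x^*(\epsilon)\to x^*(0)$ and the Walkup--Wets description of optimal bases. Both viewpoints are correct and closely related; the paper's Hausdorff-convergence formulation sidesteps the wall-crossing issue you flag (since it is a statement about limits of sets rather than of maps), while your analytic version makes the connection to the LP solution $x^*(0)$ more explicit. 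Either would be acceptable here.
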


\begin{proof}
The strict convexity of the objective function in (\ref{eq:entropicLP})
ensures that the optimal solution $x^*(\epsilon)$ 
is the unique critical point of that function in $P_{A,b}$. The critical equations
are those that define our varieties, and hence
the singleton $\{x^*(\epsilon)\}$ is equal to
$ R^+_{A,c,\epsilon}  \cap  P_{A,b} \,$ resp.~$T^+_{A,c,\epsilon}  \cap  P_{A,b}$.
These two singletons are different, but they both converge to the same optimal
vertex $x^*(0)$ of (\ref{eq:LP}). The regular triangulation given by $c$ is
given combinatorially by the optimal bases as $b$ ranges over ${\rm pos}(A)$.
Each optimal basis specifies a $d$-dimensional face of the orthant $\RR^n_{\geq 0}$, and the
images of these cones  triangulate ${\rm pos}(A)$.
Both semialgebraic sets $R^{+}_{A,c,\epsilon}$ and $T^{+}_{A,c,\epsilon}$ converge,
in the Hausdorff sense, to the fan that consists of these faces of $\RR^n_{\geq 0}$.
The linear map $A$ induces a piecewise-linear isomorphism between that fan and the cone ${\rm pos}(A)$.
\end{proof}

\section{Optimal Transport}
\label{sec3}

This section features a case study that is inspired by 
applications in machine learning \cite{Cuturi, KR}.
The classical Monge optimal transportation (OT) problem 
  deals with the construction of optimal
couplings for two given probability distributions. We explain how this problem, in its simplest version, can be written as a linear program \eqref{eq:LP}. Many generalizations can be treated analogously; see e.g.~\cite{DM,GO}. In Subsection \ref{subsec32}
we carry this out for unbalanced~OT.

\subsection{The Classical Case}

Given  probability distributions  $\mu \in \RR_{\geq 0}^{d_1}$ and $\nu \in \RR_{\geq 0}^{d_2}$  on the finite sets 
$[d_1]= \{1, \ldots, d_1\}$ and $[d_2]= \{1, \ldots , d_2\}$, and a cost 
matrix $c=(c_{\kk,\ll})_{\kk \in [d_1], \ll \in [d_2]}\in \RR^{d_1\times d_2}$,  we aim to  
\begin{equation}
\label{eq:OT} 
{\rm minimize}  \sum_{(\kk,\ll) \in [d_1] \times [d_2]} \!\!\! c_{\kk,\ll} \cdot x_{\kk,\ll} 
\quad \hbox{subject to} \quad x \geq 0 \quad {\rm and}
\end{equation}
\begin{equation}\label{cond:coupconstr}
\sum_{\ll\in [d_2]} \! x_{\kk,\ll} \,=\, \mu_{\kk} \,\,\,\hbox{for all}\,\, \kk \in [d_1]
\quad \hbox{and}  \quad   \sum_{\kk\in [d_1]} \! x_{\kk,\ll} \,= \,\nu_{\ll} \,\,\,\hbox{for all} \,\, \ll \in [d_2].
\end{equation}
We interpret $\mu_\kk$ as the proportion of units of a product stored at $\kk \in [d_1]$ and $\nu_\ll$ as the proportion of units desired at $\ll \in [d_2]$. Our goal is to transport all units from $[d_1]$ to $[d_2]$ with minimal transportation cost. 
The entry $c_{\kk,\ll}$ is the cost of transporting one unit from $\kk$ to $\ll$.
The feasible solutions $x = (x_{\kk,\ll})$ are known as {\em transportation plans},
 or as {\em couplings} of  $\mu$ and $\nu$.
Since  $\|\mu\|_1= \|\nu\|_1=1$,  any solution $x$ is a probability distribution on $[d_1] \times [d_2]$.

 The matrix $A$ for the linear program above has $d=d_1+d_2-1$ rows and $n = d_1 d_2$  columns,
 and its entries are in $\{0,1\}$. It represents the linear map
 that takes a $d_1 \times d_2$ matrix $x$ to its vector 
 $b = (\mu,\nu)$ of row sums and column sums. 
 Here $\nu_{d_2}$ is deleted, so   the rows of $A$ are linearly independent.
 In OT theory it is customary to keep this redundancy.
 We saw the matrix $A$   for  $d_1=2, d_2 = 3$ in (\ref{eq:A23}).
The feasible region $P_{A,b}$ is a 
 {\em transportation polytope}, consisting of
all nonnegative $d_1 \times d_2$ matrices with fixed row and column sums.
Every transportation polytope contains a unique rank one matrix $x$, namely
 the Birch point $x= (\mu_\kk \cdot \nu_\ll)$ of $P_{A,b}$. This corresponds to an independent
joint distribution.

The polytope underlying the cone ${\rm pos}(A)$ is 
 the product $\Delta_{d_1-1} \times \Delta_{d_2-1}$ of two simplices.
 The triangulations of {\rm pos}(A) are studied in
 \cite[Section 6.2]{DRS}. The  toric variety $T_A$  is the
 cone over the Segre variety $\PP^{d_1-1} \times \PP^{d_2-1}$.
 Its points are the
   $d_1 \times d_2$ matrices of rank at most $1$. 
    The prime ideal of $T_A$ is generated by the
 $2 \times 2$ minors of
a $d_1 \times d_2$ matrix; see
 \cite[Example~5.1]{GBCP}.
The positive variety $T_A^+$ represents the independence model
for distributions~on~$[d_1]$ and $[d_2]$.
We know from Proposition \ref{prop:homeo} that the linear map $A$
identifies $T_A^+$ with the cone ${\rm pos}(A)$.

The same holds for the positive part $R_A^+$ of the reciprocal
variety $R_A$. From a combinatorial perspective, it would be interesting to study 
this variety for OT in more detail.
However, in the remainder of this paper we focus on the
toric variety $T_A$ instead. Here is the reason:

\begin{remark} \label{rmk:efficiency}
In machine learning one uses \emph{entropic} regularization rather than \emph{logarithmic barrier}  regularization in~(\ref{eq:entropicLP}). The former is more efficient than the latter.
 Thus, when $d_1$ and $d_2$ are large,
the entropic path $\mathcal{C}^{T,+}_{A,b,c}$ is preferred to the
central path $\mathcal{C}^{R,+}_{A,b,c}$. 
We refer to \cite{Cuturi} for an explanation. Example \ref{ex:sinkhorn}
and the introduction of \cite{Weed} offer details and references.
\end{remark}

We next explain the degree drop which was observed for the entropic curve in Example~\ref{ex:trans23}.

\begin{proposition}  \label{prop:drop}
Let $b \in {\rm pos}(A)$ and $c \in \ZZ^{d_1 \times d_2} $
where $A$ is the matrix for OT. If $d_2 \geq 3$ then the upper bound 
in (\ref{eq:degreebounds}) for the degree  of 
the entropic curve $\mathcal{C}^T_{A,b,c}$ is always strict.
\end{proposition}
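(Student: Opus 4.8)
The plan is to compare two degrees attached to the matrix $\binom{A}{c}$: the B\'ezout-type upper bound $\mathrm{vol}(\mathrm{conv}(\binom{A}{c}\cup 0))$, which is the degree of the scaled toric threefold-or-higher-dimensional variety $T_{\binom{A}{c}}$, and the actual degree of the curve $\mathcal{C}^T_{A,b,c}$ cut out by intersecting $T_{\binom{A}{c}}$ with the $d$-dimensional affine space $\{Ax=b\}$ inside $\CC^n$. Strictness of the inequality in \eqref{eq:degreebounds} is equivalent to saying this intersection is not a complete intersection of $T_{\binom{A}{c}}$ with $n-d-1$ generic hyperplanes of the pencil $\{Ax=b\}$ — equivalently, that the intersection picks up a positive-dimensional or embedded component ``at the boundary'' (on the toric boundary $T_{\binom{A}{c}}\setminus (\CC^*)^n$), or more precisely that intersecting with the specific linear space $\{Ax=b\}$ loses intersection points that escape to the coordinate hyperplanes.

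First I would recall the explicit description from the OT setup: $T_A$ is the cone over the Segre variety $\PP^{d_1-1}\times\PP^{d_2-1}$, i.e.\ the rank-$\le 1$ matrices, and $T_{\binom{A}{c}}$ is its Gr\"obner degeneration / torus translate $z^c\star T_{\binom{A}{c}}$ as in \eqref{eq:zcTA}; equivalently $T_{\binom{A}{c}}$ is the Zariski closure of $\{z^c\star y : y \text{ rank }\le 1\}$ for the relevant scaling. The key structural fact I would exploit is that the entropic curve lies in $\{Ax=b\}$, which forces \emph{all row sums and column sums} of the $d_1\times d_2$ matrix $x$ to be the prescribed positive numbers $\mu_\kappa,\nu_\ell$; in particular every point of $\mathcal{C}^T_{A,b,c}$, and its limit points, must have every row and every column nonzero. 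Meanwhile, as $\epsilon\to 0$ the scaled toric variety degenerates to the union of coordinate subspaces indexed by the regular triangulation of $\Delta_{d_1-1}\times\Delta_{d_2-1}$ given by $c$ (Theorem~\ref{thm:homeolambda}); when $d_2\ge 3$ these triangulations are nontrivial, so some of the ``naive'' B\'ezout intersection points of the toric variety with the linear space must lie on the toric boundary where some coordinates vanish in a way incompatible with $Ax=b$ having all positive row/column sums.

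The cleanest route is probably a direct degree computation rather than a degeneration argument: compute $\deg \mathcal{C}^T_{A,b,c}$ as the number of points of $T_{\binom{A}{c}}\cap \{Ax=b\}\cap H$ for a generic hyperplane $H$, and compare with $\mathrm{vol}(\mathrm{conv}(\binom{A}{c}\cup 0))$. I would use the toric dictionary: the degree of the curve equals a mixed-volume / lattice-point count on the polytope $\mathrm{conv}(\binom{A}{c})$ intersected appropriately, and the ``missing'' quantity corresponds to the normalized volumes of the faces of $\mathrm{conv}(\binom{A}{c}\cup 0)$ lying on coordinate hyperplanes through $0$ — these are nonzero precisely because $\mathrm{conv}(A)=\Delta_{d_1-1}\times\Delta_{d_2-1}$ has proper faces of dimension $d-1$ (facets) that, together with $0$, contribute positive volume once $d_2\ge 3$ so that the prism/product is not a simplex. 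Concretely, $\mathrm{conv}(\binom{A}{c}\cup 0)$ has the origin as a vertex whose vertex figure is essentially $\mathrm{conv}(A)$ lifted by $c$, and the B\'ezout number overcounts by the contributions of the lower faces; when $d_2\ge 3$, $\Delta_{d_1-1}\times\Delta_{d_2-1}$ is not a simplex (it has $d_1d_2 > d_1+d_2-1+1$ vertices, or more simply its normalized volume $\binom{d_1+d_2-2}{d_1-1}$ exceeds $1$), so at least one such boundary face has positive normalized volume, giving a strict drop.

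The main obstacle I expect is making the bookkeeping of ``which intersection points escape to the toric boundary'' completely rigorous, i.e.\ proving that the degree deficit is \emph{exactly} the sum of the relevant boundary volumes and that this sum is \emph{strictly positive} for all $b,c$ (not just generic ones) as soon as $d_2\ge 3$. The positivity should follow from the combinatorics of $\Delta_{d_1-1}\times\Delta_{d_2-1}$: one exhibits a specific circuit of $A$ (e.g.\ a $2\times 2$ minor relation involving three columns spanning a triangle in one factor) whose associated binomial in the Markov basis of $\binom{A}{c}$ has a monomial supported off a facet, forcing the toric variety to meet that facet's coordinate subspace; then a local analysis near that subspace shows the intersection with $\{Ax=b\}$ there is empty, so those points are genuinely lost. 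I would organize the proof so that the only real content is this one positivity statement about the triangulated prism, citing \cite[Section 6.2]{DRS} and \cite{DSV} for the triangulation combinatorics, and deferring the general ``degree = B\'ezout minus boundary'' identity to standard toric intersection theory (\cite[Section 9.4]{DRS} and any toric geometry text, as already invoked for Proposition~\ref{prop:volume}).
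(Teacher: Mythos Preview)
Your proposal has a genuine gap: you have misidentified where the excess intersection lives. You look for lost intersection points on the \emph{toric boundary} $T_{\binom{A}{c}}\setminus(\CC^*)^n$, i.e.\ on coordinate hyperplanes $\{x_{\kappa,\lambda}=0\}$ inside $\CC^n$. But points of $T_{\binom{A}{c}}\cap\{Ax=b\}$ that happen to lie on some $\{x_{\kappa,\lambda}=0\}$ are perfectly good affine points of the curve and contribute to its degree; nothing is lost there. Your observation that every row and every column of $x$ must be nonzero (since the marginals $\mu_\kappa,\nu_\lambda$ are nonzero) does not exclude individual entries from vanishing, so it does not produce the incompatibility you need. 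The degeneration $\epsilon\to 0$ is also a red herring: $\mathcal{C}^T_{A,b,c}=T_{\binom{A}{c}}\cap\{Ax=b\}$ is defined without reference to $\epsilon$, and you have additionally conflated the $(d{+}1)$-dimensional variety $T_{\binom{A}{c}}$ with the $d$-dimensional slice $T_{A,c,\epsilon}=z^c\star T_A$.

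The actual mechanism, which the paper exploits, is an improper intersection \emph{at the hyperplane at infinity} in $\PP^n$. One passes to the projective closures $\overline{T}_{\binom{A}{c}}$ and $\{Ax=x_0 b\}\subset\PP^n$; equality in B\'ezout would require $\overline{T}_{\binom{A}{c}}\cap\{Ax=x_0 b\}$ to be exactly the (one-dimensional) closure of the entropic curve. On the hyperplane $\{x_0=0\}$ the linear constraint becomes $Ax=0$, and since $T_A\subset T_{\binom{A}{c}}$ the intersection contains the affine cone of rank-one matrices $x=(\mu_\kappa\nu_\lambda)$ with $\sum_\kappa\mu_\kappa=\sum_\lambda\nu_\lambda=0$. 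This cone has dimension $(d_1-1)+(d_2-1)-1=d_1+d_2-3$, which is at least $2$ once $d_2\ge 3$ (and $d_1\ge 2$, the nondegenerate case). Hence there is an extraneous component of dimension strictly larger than one, the projective intersection is improper, and the B\'ezout bound is strict. Your volumetric intuition that the key inequality is $\binom{d_1+d_2-2}{d_1-1}>1$ is morally right---that binomial is $\deg T_A$, and its being $>1$ is what makes the component at infinity fat---but the argument must be run at infinity in $\PP^n$, not on coordinate hyperplanes in $\CC^n$.
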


\begin{proof}
The trivial case $d_1=d_2=2$ is covered by Remark \ref{rmk:trivial}.
We have $d_2 \geq 3$, so $n = d_1 d_2 $ is larger than $ d+1 = d_1 + d_2$.
Since $T_A$ and $T_{\binom{A}{c}}$ are affine toric varieties in $\CC^n$, we consider
their closures  $\overline{T}_A$ and $\overline{T}_{\binom{A}{c}}$ in $\PP^n$.
We write $\{x_0 = 0\} $ for the hyperplane at infinity $ \PP^n \backslash \CC^n$.
We are interested in the closure in $\PP^n$
of the entropic curve. This projective curve is denoted~$\overline{ \mathcal{C}}^T_{A,b,c}$.

 The upper bound on the right in (\ref{eq:degreebounds}) is the degree of the
$(d+1)$-dimensional toric variety $\overline{T}_{\binom{A}{c}}$ in $\PP^n$.
We intersect $\overline{T}_{\binom{A}{c}}$ with the codimension $d$ linear space
$\{x \in \PP^n : A x = x_0 b \}$. One of the irreducible components of this intersection 
is the curve  $\overline{ \mathcal{C}}^T_{A,b,c}$.
By the general B\'ezout Theorem, the equation 
$\,{\rm degree}\bigl(\overline{ \,\mathcal{C}}^T_{A,b,c}\bigr) = 
{\rm degree}\bigl(\,\overline{T}_{\binom{A}{c}}\bigr)\,$ means that 
there is no component other than the entropic curve.
Our goal is therefore to identify an extraneous component~in 
\begin{equation}
\label{eq:specialintersection1}
\overline{T}_{\binom{A}{c}} \,\, \cap \,\, \bigl\{ x \in \PP^n : A x = x_0 b \bigr\} . 
\end{equation}
Restricting to the hyperplane at infinity, we see that (\ref{eq:specialintersection1}) contains
\begin{equation}
\label{eq:specialintersection2}
\overline{T}_A \,\cap\, \bigl\{ x \in \PP^n : A x = 0 \bigr\}  \quad \supseteq \quad
  T_A \,\cap \, \bigl\{x \in \CC^n : Ax = 0 \bigr\}.
  \end{equation}
The affine variety on the right consists of all
$d_1 \times d_2$ matrices of rank $\leq 1$ whose
rows and columns sum to zero. Such matrices have the form
$x = (\mu_\kappa \cdot \nu_\lambda)$ where $\mu \in \CC^{d_1}$ and $\nu \in \CC^{d_2}$
satisfy $\sum_{\kappa=1}^{d_1} \mu_\kappa    = \sum_{\lambda=1}^{d_2} \nu_\lambda  = 0$.
This variety has dimension $d_1 + d_2- 3 \geq 2$, so the
intersection (\ref{eq:specialintersection1}) has an extraneous component whose
dimension exceeds that of $\,\mathcal{C}^T_{A,b,c}$.
\end{proof}

\begin{remark}
Our proof reflects the special behavior we already know from the intersection
\begin{equation}
\label{eq:specialintersection3}
\overline{T}_A \,\cap\, \bigl\{ x \in \PP^n : A x = x_0 b \bigr\}  \quad \supseteq \quad
  T_A \,\cap \, \bigl\{x \in \CC^n : Ax = b \bigr\}.
  \end{equation}
  The toric variety $T_A$ has degree $\binom{d_1+d_2-2}{d_1-1}$,
  but the intersection on the right has degree one.
  It is a single point, which is
    rational over $b = (\mu,\nu)$, namely the Birch point
  $x = (\mu_\kappa \cdot \nu_\lambda)$.
  \end{remark}

\subsection{Unbalanced Case: Conic Coupling} 
\label{subsec32}

Problem (\ref{eq:OT}) is infeasible for optimal transport between measures $\mu$ and $\nu$ with $\|\mu\|_1 \ne \|\nu\|_1 $. This unbalanced case is relevant in  the statistical analysis of partial or incomplete data sets.  
One remedy is to replace the 
hard constraint \eqref{cond:coupconstr} by
a penalty function, e.g.\ Kullback-Leibler \cite{CPSV}.
We here follow \cite{CPSV2, MR3763404} and present a
  linear programming formulation (\ref{eq:LP}).
   In particular, this formulation can be understood as a moment constrained optimal transport problem.

  Let us assume that, after discretization and scaling, the entries of
  the margins $\mu$ and $\nu$ are integers. This can be achieved up to arbitrary numerical precision. More precisely, we fix positive integers
  $e_1$ and $e_2$ such that 
  $\mu_\kappa \in  [e_1]$ for all  $\kappa \in [d_1]$
  and $\nu_\lambda \in [e_2]$ for all $\lambda \in [d_2]$.
  
  We fix the state spaces $[d_1] {\times} [e_1]$ and  $[d_2] {\times} [e_2]$.
A joint probability distribution $x = (x_{\kk, i, \ll ,j})$ on their product 
$ ([d_1] {\times} [e_1]) \times  ([d_2] {\times} [e_2])$ is called
    a \textit{conic coupling} for $\mu$ and $\nu$ if 
\begin{equation}\label{cond:gencoup}
 \sum_{\ll =1}^{d_2} \sum_{i=1}^{e_1} \sum_{j =1}^{e_2} i \,x_{\kk,i,\ll,j} = \mu_{\kk}\,\,\, {\rm for}\,\, \kk \in [d_1]
 \quad {\rm and}  \quad  
 \sum_{\kk =1}^{d_1} \sum_{i=1}^{e_1} \sum_{j=1}^{e_2} j \,x_{\kk,i,\ll,j} = \nu_{\ll}\,\,\, {\rm for}\,\, \ll  \in [d_2].
\end{equation}
We also assume that the cost function is extended to $c:
 ([d_1] {\times} [e_1]) \times  ([d_2] {\times} [e_2]) \rightarrow \RR$. 
  The value $c_{\kk, i, \ll, j}$ is interpreted as the cost of generating
   $j$ units of mass at $\ll \in [d_2]$ from $i$ units of mass at $\kk\in [d_1]$.
       We propose the following relaxation of  OT in the unbalanced case:
\begin{equation}
\label{eq:UOT}
{\rm Minimize}  \sum_{(\kk,i,\ll,j) \,\,\in \atop 
[d_1] {\times} [e_1] \times [d_2] {\times} [e_2]} \!\!\!\!\!\!\!
c_{\kk,i,\ll,j} \cdot x_{\kk,i,\ll,j} \,\,\hbox{ subject to }\,\, x \geq 0 \,\,\,{\rm and}\, \,\, (\ref{cond:gencoup}).
  \end{equation}
In the context of statistics, one can (but need not) impose the  normalization constraint
\begin{equation}
\label{eq:sumtoone}
\sum_{(\kk,i,\ll,j) \,\,\in \atop 
[d_1] {\times} [e_1] \times [d_2] {\times} [e_2]} 
 \!\!\!\!\!\!\! x_{\kk,i,\ll,j} \quad = \quad 1 .
 \end{equation}
The minimizers $x$ for the problem  (\ref{eq:UOT})-(\ref{eq:sumtoone}) are
 called \textit{optimal conic couplings} of $\mu$ and~$\nu$. They define a  cost-optimal random sampling mechanism of particle cluster pairs in $[d_1]$ and $[d_2]$ whose  mean marginal empirical distributions
  are  $\mu$ and $\nu$, respectively.  
We next show that our formulation makes sense, meaning that conic couplings always exist.

\begin{lemma}
The linear program  (\ref{eq:UOT})-(\ref{eq:sumtoone}) is feasible
for all $\,\mu \in [e_1]^{d_1}$ and all $\,\nu \in [e_2]^{d_2}$.
\end{lemma}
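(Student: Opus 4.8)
The plan is to exhibit an explicit feasible point for \eqref{eq:UOT}--\eqref{eq:sumtoone}. Since the constraints \eqref{cond:gencoup} and \eqref{eq:sumtoone} are linear and the variables are required only to be nonnegative, it suffices to produce one nonnegative tensor $x = (x_{\kk,i,\ll,j})$ satisfying all of them; optimality is irrelevant for feasibility. The natural candidate is a product (independent) distribution, mimicking the role of the Birch point in the balanced case: I would look for $x$ of the form $x_{\kk,i,\ll,j} = \alpha_{\kk,i}\,\beta_{\ll,j}$, where $\alpha$ is a probability distribution on $[d_1]\times[e_1]$ and $\beta$ is a probability distribution on $[d_2]\times[e_2]$. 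Then \eqref{eq:sumtoone} holds automatically, and the two families of constraints in \eqref{cond:gencoup} decouple: the first becomes $\sum_{i} i\,\bigl(\sum_j \alpha_{\kk,i}\bigr) \cdot \bigl(\sum_{\ll,j}\beta_{\ll,j}\bigr) = \mu_\kk$, i.e.\ $\sum_i i\, a_{\kk,i} = \mu_\kk$ where $a_{\kk,i} := \sum_j$ is obtained by marginalizing, and similarly $\sum_j j\, b_{\ll,j} = \nu_\ll$ with $b_{\ll,j}$ a marginal of $\beta$; here I use that $\beta$ and $\alpha$ are probability distributions so the cross sums are $1$.

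So the problem reduces to the following one-dimensional question, asked once for each $\kk$ and once for each $\ll$: given an integer $m$ with $1 \le m \le e$, find nonnegative reals $a_1,\dots,a_e$ with $\sum_i a_i$ equal to a prescribed value (a share of the total mass $1$) and $\sum_i i\, a_i$ proportional to $m$. The cleanest route is to first handle the marginals separately and then glue. Concretely, choose any probability weights $w_1,\dots,w_{d_1} > 0$ on $[d_1]$ and $v_1,\dots,v_{d_2}>0$ on $[d_2]$, and for each $\kk$ put the conditional mass $w_\kk$ at the single atom $i = \mu_\kk \in [e_1]$ — legitimate because $\mu_\kk$ is assumed to lie in $[e_1]$ — giving $\alpha_{\kk,i} = w_\kk \,[\,i = \mu_\kk\,]$; then $\sum_{i} i\,\sum_{\ll,j}\alpha_{\kk,i}\beta_{\ll,j} = \mu_\kk \, w_\kk$. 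This is not quite $\mu_\kk$, so either I rescale, or — more simply — I drop \eqref{eq:sumtoone} first, set $\alpha_{\kk,\mu_\kk} = 1$ for all $\kk$ and $\beta_{\ll,\nu_\ll}=1$ for all $\ll$ (a nonnegative but un-normalized tensor), verify \eqref{cond:gencoup} directly, and only afterwards note that one can always rescale and redistribute to also meet \eqref{eq:sumtoone}. The honest version: take $x_{\kk,i,\ll,j} = \delta_{i,\mu_\kk}\,\delta_{j,\nu_\ll}\,\gamma_{\kk,\ll}$ for a suitable nonnegative matrix $(\gamma_{\kk,\ll})$; the constraints \eqref{cond:gencoup} become $\mu_\kk \sum_\ll \gamma_{\kk,\ll} = \mu_\kk$ and $\nu_\ll \sum_\kk \gamma_{\kk,\ll} = \nu_\ll$, i.e.\ $(\gamma_{\kk,\ll})$ is a matrix with all row sums and all column sums equal to $1$ (a doubly stochastic matrix), and \eqref{eq:sumtoone} becomes $\sum_{\kk,\ll}\gamma_{\kk,\ll} = 1$.

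There is a tension here: a $d_1 \times d_2$ nonnegative matrix cannot simultaneously have all row sums $1$, all column sums $1$, and total sum $1$ unless $d_1 = d_2 = 1$. I expect this bookkeeping clash to be the one genuinely delicate point, and the resolution is that \eqref{eq:sumtoone} is optional (the lemma quantifies over \eqref{eq:UOT}--\eqref{eq:sumtoone}, and the paper explicitly says one ``can but need not'' impose \eqref{eq:sumtoone}); so I would prove feasibility of \eqref{eq:UOT} under \eqref{cond:gencoup} alone by taking any doubly stochastic $(\gamma_{\kk,\ll})$ — e.g.\ the uniform matrix $\gamma_{\kk,\ll} = $ something with the right sums, which exists whenever... in fact the simplest choice is not uniform but, say, $\gamma$ supported so that its row/column sums are $1$; such matrices exist for all $d_1,d_2 \ge 1$, for instance a matrix with a single $1$ in each row arranged to also give column sums $1$ when $d_1 = d_2$, and in general one can pad appropriately, but cleanest is to not even require a single tensor product and instead observe directly that the linear system \eqref{cond:gencoup} with $x\ge 0$ is solvable because the point $x_{\kk,i,\ll,j} = \tfrac{1}{d_2 e_1 e_2}\,\delta_{i,\mu_\kk} + \text{(correction)}$... — and then, if one does want \eqref{eq:sumtoone}, rescale all variables and the margins together, which is harmless since the lemma fixes $\mu,\nu$ but the normalized problem is just a scaled copy. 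I would present the argument in the streamlined form: define $x_{\kk,i,\ll,j}$ explicitly using Kronecker deltas in $i,j$ times a doubly stochastic weight matrix, check \eqref{cond:gencoup} in two lines, and remark that \eqref{eq:sumtoone} is then a matter of rescaling; the main obstacle is purely the normalization bookkeeping just described, not the existence of the point.
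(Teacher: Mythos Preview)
Your ansatz $x_{\kk,i,\ll,j} = \delta_{i,\mu_\kk}\,\delta_{j,\nu_\ll}\,\gamma_{\kk,\ll}$ cannot be made to work, and the ``rescaling'' fix you propose at the end is incorrect. With that ansatz, constraint \eqref{cond:gencoup} forces every row sum and every column sum of $\gamma$ to equal $1$. This already requires $d_1 = d_2$ (sum of row sums $=$ sum of column sums), so your construction fails outright when $d_1 \neq d_2$. Even when $d_1 = d_2$, the total mass is $\sum_{\kk,\ll}\gamma_{\kk,\ll} = d_1$, not $1$, so \eqref{eq:sumtoone} fails. Your suggestion to rescale $x$ afterwards does not help: dividing $x$ by its total mass $M$ turns the left-hand sides of \eqref{cond:gencoup} into $\mu_\kk/M$ and $\nu_\ll/M$, not $\mu_\kk$ and $\nu_\ll$. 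The constraints \eqref{cond:gencoup} are \emph{moment} constraints, not marginal constraints, and they are not scale-invariant. The lemma really does ask for \eqref{eq:sumtoone} to hold simultaneously.

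The paper's fix is a small but decisive change of index: instead of placing the $\kk$-th piece of mass at $i = \mu_\kk$, place \emph{all} of the $\mu$-side mass at the single index $i = \|\mu\|_1$, weighted by the normalized margin $\overline{\mu}_\kk = \mu_\kk/\|\mu\|_1$, and similarly on the $\nu$-side. That is,
\[
x_{\kk,i,\ll,j} \;=\; \overline{\mu}_\kk \,\delta_{\|\mu\|_1,\,i}\,\cdot\,\overline{\nu}_\ll\,\delta_{\|\nu\|_1,\,j}.
\]
Now the total mass is $\bigl(\sum_\kk \overline{\mu}_\kk\bigr)\bigl(\sum_\ll \overline{\nu}_\ll\bigr) = 1$, and $\sum_{\ll,i,j} i\,x_{\kk,i,\ll,j} = \overline{\mu}_\kk \cdot \|\mu\|_1 = \mu_\kk$, so both \eqref{cond:gencoup} and \eqref{eq:sumtoone} hold at once. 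The point you missed is that the factor $i$ in the moment constraint lets you trade mass against index: a small probability $\overline{\mu}_\kk$ at a large index $\|\mu\|_1$ produces the correct moment $\mu_\kk$ while keeping the probabilities summing to $1$.
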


\begin{proof}
Let $\overline{\mu} = \frac{1}{ \|\mu\|_1} \mu $ and
$\overline{\nu} = \frac{1}{ \|\nu\|_1} \nu $ be the induced probability distributions 
on $[d_1]$ and~$[d_2]$.
We define a probability distribution $x =(x_{\kk,i,\ll,j})$ on the space
$\, [d_1] {\times} [e_1] \times [d_2] {\times} [e_2]$ by setting
\begin{equation}
\label{eq:bysetting}
 x_{\kk,i,\ll,j} \,\,\,=\,\,\,\,  \overline  \mu_\kk \cdot
  \delta_{\|\mu\|_1,i} \,\cdot \, \overline \nu_\ll  \cdot\delta_{\|\nu\|_1,j}.
\end{equation}
Here we use Kronecker delta notation, i.e.~$\delta_{a,b} = 1$ if $a=b\,$ and
$\,\delta_{a,b} = 0$ if $a \not= b$.
The numbers in (\ref{eq:bysetting}) are nonnegative. One checks that they satisfy both
(\ref{cond:gencoup}) and (\ref{eq:sumtoone}).
\end{proof}




To connect to our general set up we write the linear program (\ref{eq:UOT})  in the standard form~(\ref{eq:LP}).
In what follows we assume that  $d_1,d_2,e_1,e_2 \geq 2$. 
The matrix $A$ has $n=d_1e_1 d_2 e_2$ columns and
 $d = d_1 + d_2$ linearly independent rows.
 We identify $\CC^n$ with the space
of tensors $x = (x_{\kappa,i,\lambda,j})$ of
  format $d_1 {\times} e_1 \times d_2  {\times} e_2$.
  The column of $A$ indexed by $(\kappa,i,\lambda,j)$ is the
  vector  $i {\bf e}_\kappa \oplus j {\bf e}_\lambda $
in $\NN^{d} = \NN^{d_1} \oplus \NN^{d_2}$, where ${\bf e}_\kappa$ and ${\bf e}_\lambda$ denote unit vectors.
If we set $b = (\mu,\nu)^T \in \RR^d$ then the
polytope $P_{A,b}$ consists of all nonnegative tensors $x$ that satisfy
the linear constraints (\ref{cond:gencoup}).

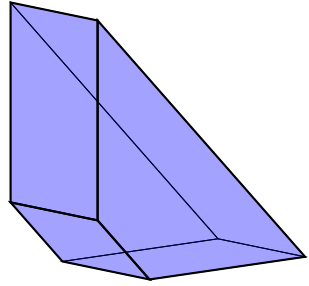
\begin{figure}[h]
\centering
\tdplotsetmaincoords{80}{50}
\begin{tikzpicture}[baseline=(A.base),scale=1.8,tdplot_main_coords]
\coordinate (A) at (0,2,0);
\coordinate (B) at (0,0,2);
\coordinate (C) at (0,1/2,0);
\coordinate (D) at (0,0,1/2);
\coordinate (E) at (1,2,0);
\coordinate (F) at (1,0,2);
\coordinate (G) at (1,1/2,0);
\coordinate (H) at (1,0,1/2);
	
\draw[fill opacity=0.2,fill = blue,] (A)--(B)--(F)--(E)--cycle;		
\draw[fill opacity=0.2,fill = blue,] (A)--(B)--(D)--(C)--cycle;
\draw[fill opacity=0.2,fill = blue] (E)--(G)--(C)--(A)--cycle;
\draw[fill opacity=0.2,fill = blue,thick] (C)--(D)--(H)--(G)--cycle;
\draw[fill opacity=0.2,fill = blue,thick] (B)--(D)--(H)--(F)--cycle;
\draw[fill opacity=0.2,fill = blue,thick] (E)--(F)--(H)--(G)--cycle;
\end{tikzpicture}
\caption{The $4$-dimensional cone ${\rm pos}(A)$ in Example \ref{ex:2222} has a slanted cube for its base.}
\label{fig:P2222}
\end{figure}

\begin{example}[$d_1 = e_1 = d_2 = e_2 = 2$] \label{ex:2222} \setcounter{MaxMatrixCols}{20}
Our matrix has $d=4$ rows and $n=16$ columns:
\begin{equation}
\label{eq:4by16}
 A \,\, = \,\,
 \begin{small}
\begin{pmatrix}
1 & 1 & 1 & 1 & 2 & 2 & 2 & 2 & 0 & 0 & 0 & 0 & 0 & 0 & 0 & 0 \\
0 & 0 & 0 & 0 & 0 & 0 & 0 & 0 & 1 & 1 & 1 & 1 & 2 & 2 & 2 & 2 \\
1 & 2 & 0 & 0 & 1 & 2 & 0 & 0 & 1 & 2 & 0 & 0 & 1 & 2 & 0 & 0 \\
0 & 0 & 1 & 2 & 0 & 0 & 1 & 2 & 0 & 0 & 1 & 2 & 0 & 0 & 1 & 2 \\
\end{pmatrix}. 
\end{small}
\end{equation}
We identify $\CC^{16}$ with the space of
$2 \times 2 \times 2 \times 2$-tensors $x = (x_{\kappa,i,\lambda,j})$.
The coordinates  
$x_{1111}, x_{1112}, \ldots,x_{2222}$
are ordered lexicographically, which matches the column ordering of~$A$.
The  toric variety $X_A$ has dimension $4$ and degree $72$
  in $\CC^{16}$.
The prime ideal of $X_A$ is homogeneous
with respect to the column sum grading  
$(2,3,2,3,3,4,3,4,2,3,2,3,3,4,3,4)$.
It is minimally generated by $39$ binomials: $5$ of degree $4$,
$8$ of degree $5$, $18$ of degree~$6$,
and $8$ of degree~$7$.
The polyhedral cone $ {\rm pos}(A)$ is spanned by $8$ rays, and it has $6$ facets. Explicitly, 
\begin{equation}
\label{eq:coneover3cube}
 {\rm pos}(A) \,\,= \,\, \bigl\{\, b \in \RR^4_{\geq 0} \,:\,
b_1 + b_2 \,\leq\, 2b_3 + 2b_4 \,\,\, {\rm and} \,\,\, b_3 + b_4 \,\leq\,  2b_1 + 2b_2 \bigr\}.
\end{equation}
This is the cone over a polytope combinatorially isomorphic to a $3$-cube,
 shown in Figure~\ref{fig:P2222}.  The vertices of that cube correspond to 
 the eight columns of $A$ with entries $0,0,1,2$.
 \hfill $\diamond$   \end{example}

\section{Polytopes and their Volumes}
\label{sec4}

The entropic method for solving the linear program \eqref{eq:LP} is a two-step process.
First, the solution $x^*(\epsilon)$ to the regularized problem \eqref{eq:entropicLP}  is computed.  Here,
 $\epsilon > 0$ and
$H(t) = t \, \log(t) - t$.
Second, one lets $\epsilon \rightarrow 0$ and tracks the minimizer $x^*(\epsilon)$ to the optimal vertex $x^*(0)$ of~$P_{A,b}$. 

Step 1 amounts to solving the polynomial system given by $A \, x = b$ and $x \in T_{A,c,\epsilon}$.
For linear programming, one wants the unique positive solution $x^*(\epsilon)$.
But, for other applications, e.g.~scattering amplitudes in particle physics \cite{StuTel},
all complex solutions are needed.  A standard method for finding them all
 is \emph{homotopy continuation} \cite{sommese2005numerical}. 
 We  expect the number of solutions  to be
$\deg T_{A,c,\epsilon} = {\rm vol}({\rm conv}(A \cup 0))$, and this
is the number of paths to be tracked. This number is also the algebraic degree
of $x^*(\epsilon)$, over the ground field $\QQ(z)$ in~(\ref{eq:zcTA}).
   
Numerical algebraic geometry interfaces
gracefully with interior point methods in optimization.
    In a scenario where the matrix $A$ is fixed and
    \eqref{eq:entropicLP} must be solved for many different vectors $b$ and $c$,
     it makes sense to initialize by computing all complex solutions.
This needs to be done     \emph{only once}. Indeed, for new parameters $b', c'$, one can use $x^*(\epsilon) \in T_{A,c,\epsilon} \cap \{A \, x = b \}$ as a start solution to find the positive point in $T_{A,c',\epsilon} \cap \{ A \, x = b' \}$.  We will come back to continuation methods at the
end of Section \ref{sec5},
 in our discussion of step 2, in which $\epsilon \rightarrow 0$. 

Given an interesting matrix $A$, the reasons above motivate the
combinatorial problem~of finding the degree of $T_{A,c,\epsilon}$.
This means finding the volume of the polytope ${\rm conv}(A \cup 0)$.
We here solve this problem for unbalanced optimal transport,
as formulated in Subsection~\ref{subsec32}.

Let $A$ be the $d \times n$ matrix for conic coupling (\ref{eq:UOT}),
where $d = d_1 + d_2$ and $n=d_1e_1 d_2 e_2$.
For any right hand side 
$b = (b_1,\ldots,b_{d_1},b_{d_1+1},\ldots,b_{d_2})^T$,
the set of feasible solutions is the polytope $P_{A,b}$. We know that
$P_{A,b} \not= \emptyset $ if and only if $b \in {\rm pos}(A)$,
and ${\rm dim}(P_{A,b}) = n-d$ if and only if $b$ is in the
interior of ${\rm pos}(A)$. Our next result characterizes that cone, as in~(\ref{eq:coneover3cube}).

\begin{proposition} \label{prop:feasibility}
The feasibility cone ${\rm pos}(A)$ for the conic coupling problem (\ref{eq:UOT}) equals
 $$
 \bigl\{ \,y \in \RR^d_{\geq 0} \,:\,
  y_1 + \cdots + y_{d_1} \leq e_1(y_{d_1+1} + \cdots + y_{d_1+d_2}) ,\,
   y_{d_1+1} + \cdots + y_{d_1+d_2} \leq e_2(y_1 + \cdots + y_{d_1}) \bigr\}. 
$$
This $d$-dimensional cone has $2 d_1 d_2$ rays and $d_1+d_2 + 2$ facets. It is the cone over
 a simple $(d-1)$-dimensional polytope which is combinatorially isomorphic to
the product of simplices
$$ \Delta_1 \times \Delta_{d_1-1} \times \Delta_{d_2-1}.$$
\end{proposition}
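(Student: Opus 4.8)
The plan is to compute the feasibility cone directly from its definition as $\mathrm{pos}(A)$, the conic hull of the columns of $A$, and then to identify the facial structure of the resulting polytope by a change of coordinates that exhibits the product structure. Recall that the column of $A$ indexed by $(\kappa,i,\lambda,j)$ is $i\,\mathbf{e}_\kappa\oplus j\,\mathbf{e}_\lambda$ in $\NN^{d_1}\oplus\NN^{d_2}$. First I would show the inclusion $\mathrm{pos}(A)\subseteq C$, where $C$ is the cone on the right-hand side: summing the defining relations $y=Ax$, $x\ge 0$ over the first $d_1$ coordinates gives $\sum_{\kappa}y_\kappa=\sum i\,x_{\kappa,i,\lambda,j}$, and over the last $d_2$ coordinates $\sum_\lambda y_{d_1+\lambda}=\sum j\,x_{\kappa,i,\lambda,j}$; since $1\le i\le e_1$ and $1\le j\le e_2$ we have $\sum i\,x\le e_1\sum x$ and, using $i\ge 1$, $\sum x\le \sum i\,x$, hence $\sum_\kappa y_\kappa \le e_1\sum i\,x = e_1\bigl(\text{something} \le \sum j\,x\bigr)$ — more carefully, from $i\ge1,j\le e_2$ one gets $\sum j\,x\le e_2\sum x\le e_2\sum i\,x$, i.e.\ $\sum_\lambda y_{d_1+\lambda}\le e_2\sum_\kappa y_\kappa$, and symmetrically the other inequality. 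For the reverse inclusion $C\subseteq\mathrm{pos}(A)$, it suffices to check that every extreme ray of $C$ lies in $\mathrm{pos}(A)$; I would enumerate the extreme rays of $C$ (those obtained by making $d-1$ of the $d+2$ facet inequalities tight) and exhibit, for each, an explicit nonnegative combination of columns of $A$ hitting it.

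Next I would pin down the ray and facet counts. The $d+2$ facet inequalities are the $d$ sign constraints $y_\kappa\ge 0$ together with the two ``mixing'' inequalities; so there are $d_1+d_2+2$ facets, provided none of these is redundant, which follows once $\mathrm{pos}(A)=C$ is established and $d$ is full (this uses $d_1,d_2,e_1,e_2\ge 2$). For the rays: the columns of $A$ with $i=j=1$ give the $d_1d_2$ vectors $\mathbf{e}_\kappa\oplus\mathbf{e}_\lambda$, and those with $(i,j)=(e_1,e_2)$ give $e_1\mathbf{e}_\kappa\oplus e_2\mathbf{e}_\lambda$; I would argue these $2d_1d_2$ rays are exactly the extreme rays of $C$ (each is the unique solution, up to scaling, of an appropriate set of $d-1$ tight facets) and that all other columns $i\,\mathbf{e}_\kappa\oplus j\,\mathbf{e}_\lambda$ with $(i,j)\notin\{(1,1),(e_1,e_2)\}$ lie in the interior of a two-dimensional face spanned by two of them, hence are not extreme. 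This last verification — that exactly these $2d_1d_2$ columns are the vertices and no spurious vertices appear — is the step I expect to be the main obstacle, since it requires a careful case analysis of which subsets of the $d+2$ facets can be simultaneously tight on a single ray.

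Finally, for the combinatorial type, I would produce an explicit affine isomorphism of the base polytope with $\Delta_1\times\Delta_{d_1-1}\times\Delta_{d_2-1}$. Introduce the coordinates $s=\sum_\kappa y_\kappa$ and $t=\sum_\lambda y_{d_1+\lambda}$, together with the normalized vectors $p=(y_1,\dots,y_{d_1})/s\in\Delta_{d_1-1}$ and $q=(y_{d_1+1},\dots,y_{d}) /t\in\Delta_{d_2-1}$; the two mixing inequalities read $s\le e_1 t$ and $t\le e_2 s$, which after slicing at fixed scale cut out a segment (a copy of $\Delta_1$) in the $(s,t)$-ratio, while $p$ and $q$ range freely and independently over their simplices. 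Thus the face lattice of the base is the product of the face lattices of $\Delta_1$, $\Delta_{d_1-1}$, and $\Delta_{d_2-1}$; in particular it is simple of dimension $d-1=(d_1-1)+(d_2-1)+1$, its vertex count is $2d_1d_2$ and its facet count is $2+d_1+d_2$, matching the ray and facet counts above. I would present the coordinate change in a short display and then note that the combinatorial equivalence is immediate from it, deferring only the routine verification that the map is a bijection of the two polytopes.
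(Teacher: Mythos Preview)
Your overall strategy matches the paper's: prove both inclusions $\mathrm{pos}(A)\subseteq C$ and $C\subseteq\mathrm{pos}(A)$, the latter by identifying the extreme rays of $C$ and checking they are columns of $A$. The paper's argument is terser than yours and does not spell out the product-of-simplices isomorphism; your coordinate change $(s,t,p,q)$ in the last paragraph is a nice addition that makes that part explicit.

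However, there is a concrete error in your identification of the extreme rays. You claim they are the columns with $(i,j)\in\{(1,1),(e_1,e_2)\}$. Neither of these is extreme. For $(i,j)=(1,1)$ one has $s=t=1$, so both mixing inequalities $s\le e_1 t$ and $t\le e_2 s$ are strict (since $e_1,e_2\ge 2$); only the $d-2$ nonnegativity constraints $y_m=0$ for $m\notin\{\kappa,d_1+\lambda\}$ are tight, which is one short of the $d-1$ needed for an extreme ray. The same count applies to $(i,j)=(e_1,e_2)$. The actual extreme rays are the columns with $(i,j)\in\{(e_1,1),(1,e_2)\}$: for $(e_1,1)$ one gets $s=e_1$, $t=1$, so $s=e_1 t$ is tight, and together with the $d-2$ sign constraints this gives $d-1$ tight facets; symmetrically for $(1,e_2)$. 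These are precisely the vectors $e_1\mathbf{e}_\kappa\oplus\mathbf{e}_\lambda$ and $\mathbf{e}_\kappa\oplus e_2\mathbf{e}_\lambda$, which are columns of $A$, so $C\subseteq\mathrm{pos}(A)$ follows.

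In fact your own last paragraph would have caught this: in your coordinates the $\Delta_1$ factor is the interval $1/e_2\le s/t\le e_1$, whose endpoints correspond to the ratios $(s:t)=(1:e_2)$ and $(e_1:1)$, not $(1:1)$ and $(e_1:e_2)$. Once you correct the pair $(i,j)$, the rest of your plan goes through and agrees with the paper.
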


\begin{proof}
Let $K$ be the polyhedral cone given in the assertion.
Every column vector $i {\bf e}_\kappa \oplus j {\bf e}_\lambda$ of the matrix $A$
lies in $K$ because $0 \leq i \leq j e_1$ and $0 \leq j \leq i e_2$. Hence ${\rm pos}(A) 
\subseteq K$. For the reverse inclusion, we identify the extreme rays of $K$.
Every vector in $K$ must have at least one positive coordinate among the
first $d_1$ coordinates and ditto for the last $d_2$ coordinates. We see that at most
$d-2$ of the nonnegativity constraints can be attained. Thus every extreme ray
must attain equality in at least one of the other inequalities. This implies that the extreme rays
are $ {\bf e}_\kappa \oplus e_2 {\bf e}_\lambda $ for some $\kappa \in [d_1]$ and
$e_1 {\bf e}_\kappa \oplus  {\bf e}_\lambda $ for some $\lambda \in [d_2]$.
\end{proof}

The following  result pertains to the affine variety  $T_A$.
 Its proof is analogous to that above.
  
 \begin{proposition} \label{prop:facets}
 The $d$-dimensional polytope ${\rm conv}( A \cup 0)$  has $d+4$ facets, given by the $d + 2$ inequalities defining 
 ${\rm pos}(A)$, together with $y_1 + \cdots + y_{d_1} \leq e_1$ and $y_{d_1+1} + \cdots + y_{d_1+d_2} \leq e_2$. 
 \end{proposition}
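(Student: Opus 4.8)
The plan is to mirror the proof of Proposition~\ref{prop:feasibility}, the only change being that we now take the convex hull with the origin instead of the conic hull of the columns, so we must track how the extra point $0$ interacts with the supporting hyperplanes. First I would recall that ${\rm conv}(A\cup 0)$ is full-dimensional in $\RR^d$: the columns $i\mathbf e_\kappa\oplus j\mathbf e_\lambda$ with $(\kappa,i,\lambda,j)$ ranging over $[d_1]\times[e_1]\times[d_2]\times[e_2]$ together with $0$ clearly affinely span $\RR^d$ (e.g.\ $\mathbf e_\kappa\oplus\mathbf e_\lambda$ for all $\kappa,\lambda$ and $0$ already do). So we need to list its facets. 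Every facet is given either by a valid inequality that is tight on $0$ and on $d-1$ affinely independent columns, or by a valid inequality not tight at $0$; in the latter case the facet is ``at infinity'' relative to the cone, i.e.\ it corresponds to a bounding inequality of the polytope that cuts the rays of ${\rm pos}(A)$.

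Second I would handle the two families separately. The facets containing $0$: a valid inequality $\langle w,y\rangle\le 0$ that is tight at the origin is exactly a valid inequality for the cone ${\rm pos}(A)$, and it is facet-defining for ${\rm conv}(A\cup 0)$ iff it is facet-defining for ${\rm pos}(A)$ (the face it cuts out is the origin together with $\{i\mathbf e_\kappa\oplus j\mathbf e_\lambda:\langle w,\cdot\rangle=0\}$, which has the same dimension as the corresponding face of the cone). By Proposition~\ref{prop:feasibility} there are exactly $d_1+d_2+2 = d+2$ such facets, namely the $d$ nonnegativity inequalities $y_m\ge 0$ and the two ``coupling'' inequalities $y_1+\cdots+y_{d_1}\le e_1(y_{d_1+1}+\cdots+y_{d_1+d_2})$ and $y_{d_1+1}+\cdots+y_{d_1+d_2}\le e_2(y_1+\cdots+y_{d_1})$. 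One checks each of these is still tight on enough columns (the extreme rays of ${\rm pos}(A)$ identified in the previous proof lie in ${\rm conv}(A\cup 0)$ on integer points such as $\mathbf e_\kappa\oplus e_2\mathbf e_\lambda$, which occur as actual columns since $e_2\in[e_2]$), so dimension is preserved.

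Third, the facets not containing $0$: these are the new inequalities. I claim they are $y_1+\cdots+y_{d_1}\le e_1$ and $y_{d_1+1}+\cdots+y_{d_1+d_2}\le e_2$. Validity is immediate: on a column $i\mathbf e_\kappa\oplus j\mathbf e_\lambda$ the first sum equals $i\le e_1$ and the second equals $j\le e_2$, and both sums vanish at $0$ (so the inequalities are slack at $0$, as required for a facet not through the origin). For facet-ness of, say, $\sum_{m\le d_1}y_m\le e_1$: the columns attaining equality are those with $i=e_1$, i.e.\ $e_1\mathbf e_\kappa\oplus j\mathbf e_\lambda$ for $\kappa\in[d_1]$, $j\in[e_2]$; among these one exhibits $d-1$ affinely independent points (for instance fix $j=1$ and vary $\kappa$ over $[d_1]$ to get $d_1$ points spanning a $(d_1-1)$-flat, then fix $\kappa=1$ and vary $j$ over $[e_2]$ to pick up the remaining $d_2-1$ directions $\mathbf e_{d_1+\lambda}$). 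A short rank computation confirms these $d$ points are affinely independent, so the face is a facet; symmetrically for the other inequality. Conversely, any valid inequality slack at $0$ must become, after scaling, $\langle w,y\rangle\le 1$ with $\langle w,\text{column}\rangle\le 1$ and some equality; analyzing which columns can simultaneously lie on such a hyperplane shows the only facet-defining ones are the two above — this is the step that needs the most care, since one must rule out ``mixed'' inequalities, and I would do it by noting that the recession cone of any facet of ${\rm conv}(A\cup 0)$ not through $0$ must be a facet of ${\rm pos}(A)$ of one lower dimension... actually more cleanly: such a facet $F$, when we pass to the recession cone / take the cone over $F$, lands inside a facet of ${\rm pos}(A)$, and a dimension count forces $F$ to be the ``bounded lid'' sitting over one of the two coupling facets, giving exactly the two claimed inequalities.

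Hence ${\rm conv}(A\cup 0)$ has $(d+2)+2 = d+4$ facets, as asserted. The main obstacle is the converse direction of the third step — proving there are \emph{no other} facets off the origin — since Proposition~\ref{prop:feasibility} only controls the cone; I would dispatch it by the recession-cone argument above, observing that every facet of ${\rm conv}(A\cup 0)$ either contains $0$ (and is then a facet of ${\rm pos}(A)$) or has recession cone contained in a proper face of ${\rm pos}(A)$ of codimension $1$, and matching these up with the two coupling facets $\{y_1+\cdots+y_{d_1}= e_1(\cdots)\}$ and $\{y_{d_1+1}+\cdots = e_2(\cdots)\}$, each of which supports exactly one new bounded facet.
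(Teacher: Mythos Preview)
Your approach differs from the paper's. The paper says only that the proof is ``analogous to that above'', meaning: let $Q$ be the polytope cut out by the stated $d+4$ inequalities, check that $0$ and every column $i\mathbf e_\kappa\oplus j\mathbf e_\lambda$ lie in $Q$ (one line), and then check that every vertex of $Q$ is either $0$ or a column of $A$ (a short case analysis exactly parallel to the extreme--ray computation in Proposition~\ref{prop:feasibility}). This double containment establishes ${\rm conv}(A\cup 0)=Q$ and hence the facet description simultaneously, with no separate ``no other facets'' step.

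Your structural route (split facets according to whether they contain $0$) is reasonable and the first two parts are fine: since $0$ is a vertex of ${\rm conv}(A\cup 0)$, facets through $0$ are in bijection with facets of ${\rm pos}(A)$, and the two new inequalities are easily seen to be valid and facet-defining. The gap is in your third step. The polytope ${\rm conv}(A\cup 0)$ is bounded, so every face has trivial recession cone; and if $F$ is a $(d{-}1)$-dimensional facet \emph{not} through $0$, then ${\rm pos}(F)$ is $d$-dimensional, hence certainly not contained in a facet of ${\rm pos}(A)$. So neither version of your ``recession cone / cone over $F$'' argument can work, and the proposed matching of the two new facets with the two coupling facets of ${\rm pos}(A)$ has no precise content.

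If you want to salvage your approach rather than switch to the paper's, the correct replacement for step three is the beneath--beyond principle: facets of ${\rm conv}(A\cup 0)$ not containing $0$ are exactly the facets of ${\rm conv}(A)$ whose defining inequality is strictly satisfied at $0$. Since ${\rm conv}(A)=P_{d_1,e_1}\times P_{d_2,e_2}$ (this product decomposition is used later in the paper), its facets are $y_m\ge 0$, $\sum_{m\le d_1}y_m\le e_1$, $\sum_{m\le d_1}y_m\ge 1$, $\sum_{m>d_1}y_m\le e_2$, $\sum_{m>d_1}y_m\ge 1$; only the two ``$\le e_i$'' inequalities are strictly satisfied at $0$, and these are precisely your two new facets. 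But this requires the product description of ${\rm conv}(A)$, so the paper's direct double-containment argument is both shorter and logically cleaner here.
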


Solving the entropic regularization (\ref{eq:entropicLP}) for (\ref{eq:UOT}) means intersecting the
polytope $P_{A,b}$ with the
scaled toric variety $T_{A,c,\epsilon} = z^{c} \star T_A$, where $z = {\rm exp}(-1/\epsilon)$.
Algebraically, we compute the unique positive solution $x = x^*(\epsilon)$
to the following equations, with $H(t) = t \cdot {\rm log}(t) - t$:
\begin{equation}
\label{eq:birchconstraints}
 A x = b \qquad {\rm and} \qquad {\rm rank} \begin{pmatrix} A \\ c + \epsilon H'(x) \end{pmatrix} \,\leq \, d+1 .
 \end{equation}
 The algebraic degree of (\ref{eq:birchconstraints}) is the number of solutions in $\CC^n$.
This is the  degree over $\mathbb{Q}$~of the floating point numbers that are output by any numerical algorithm.
 It is bounded above~by
\begin{equation} \label{eq:degzT}
{\rm degree}(T_{A,c,\epsilon}) \,\,= \,\,{\rm degree}(T_A) 
\,\,\,=\,\,\, {\rm vol}\bigl( {\rm conv}(A \cup 0) \bigr)\,.
\end{equation}

Our main result is a formula in terms of  $d_1,e_1,d_2,e_2$ for this algebraic complexity measure.
In other words, we generalize the number $72$,
which is the degree of $T_A \subset \CC^{16}$ in Example~\ref{ex:2222}.

\begin{theorem} \label{thm:degreeformula}
The algebraic degree of the constraints (\ref{eq:birchconstraints}) for optimal conic coupling~is
\begin{equation}
\label{eq:degreeformula} 
{\rm degree}(T_A) 
\,\, = \,\,\,
\binom{d_1{+}d_2}{d_1} \biggl((e_1^{d_1}-1)(e_2^{d_2}-1) \,+\, \frac{d_1}{d_1{+}d_2}(e_2^{d_2}-1)\,+\, \frac{d_2}{d_1{+}d_2}(e_1^{d_1}-1) \biggr).
\end{equation}
\end{theorem}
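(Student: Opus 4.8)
The goal is to compute $\mathrm{vol}(\mathrm{conv}(A\cup 0))$, the normalized volume of the lattice polytope $Q := \mathrm{conv}(A\cup 0)\subset\RR^{d}$ whose non-origin vertices are the points $i\mathbf{e}_\kappa\oplus j\mathbf{e}_\lambda$ for $(\kappa,i,\lambda,j)\in[d_1]\times[e_1]\times[d_2]\times[e_2]$. The first step is to trim the vertex set: by the convexity argument already used in Proposition \ref{prop:feasibility} and \ref{prop:facets}, the only vertices of $Q$ that matter are those with $i\in\{1,e_1\}$ and $j\in\{1,e_2\}$, together with $0$. In fact I would argue that $Q$ is the convex hull of $0$ together with the four ``slabs'' obtained by fixing $(i,j)$ to a corner of $[1,e_1]\times[1,e_2]$, each slab being a scaled copy of a product of simplices $\Delta_{d_1-1}\times\Delta_{d_2-1}$ sitting at ``height'' $(i,j)$ in the $\binom{A}{\ }$-coordinates. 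So $Q$ is a union of pyramids over these configurations.

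\textbf{Decomposition.} The natural approach is to triangulate or otherwise subdivide $Q$ using the two facet inequalities $y_1+\cdots+y_{d_1}\le e_1$ and $y_{d_1+1}+\cdots+y_{d_2}\le e_2$ from Proposition \ref{prop:facets}, since these are precisely the facets of $Q$ that are \emph{not} facets of $\mathrm{pos}(A)$. Write $\sigma := y_1+\cdots+y_{d_1}$ and $\tau := y_{d_1+1}+\cdots+y_{d_1+d_2}$. The cone $\mathrm{pos}(A)$ is cut out by $\sigma\le e_1\tau$, $\tau\le e_2\sigma$ and nonnegativity; $Q$ adds $\sigma\le e_1$, $\tau\le e_2$. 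I would slice $Q$ along the hyperplanes $\{\sigma=e_1\}$ and $\{\tau=e_2\}$, or better, express $\mathrm{vol}(Q)$ via inclusion–exclusion over the lattice-point strata, reducing everything to computing normalized volumes of pyramids over products of two simplices scaled by integer factors. The normalized volume of $\mathrm{conv}(0\cup k_1\Delta_{d_1-1}\times k_2\Delta_{d_2-1})$, embedded at lattice height one, is $\binom{d_1+d_2}{d_1}\cdot\frac{1}{d_1+d_2}\cdot(\text{something involving }k_1^{d_1-1}k_2^{d_2-1})$ — I would derive the exact shape of this from the known fact that the normalized volume of $\Delta_{d_1-1}\times\Delta_{d_2-1}$ is $\binom{d_1+d_2-2}{d_1-1}$, and multiply by scaling factors $k_i^{d_i-1}$ and by a pyramid-over factor. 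Assembling the contributions of the four corners $(1,1)$, $(1,e_2)$, $(e_1,1)$, $(e_1,e_2)$ — with signs from inclusion–exclusion to avoid double-counting the overlapping regions $\sigma\le e_1$, $\tau\le e_2$ — should produce a sum of terms that telescopes into the closed form in \eqref{eq:degreeformula}. The appearance of $(e_1^{d_1}-1)$ rather than $e_1^{d_1}$ is exactly the signature of subtracting off the ``small'' pyramid over $1\cdot\Delta_{d_1-1}$, consistent with $\sum_{k=1}^{e_1-1}$-type summations of $k^{d_1-1}$ collapsing after the pyramid integration.

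\textbf{Verification and the hard part.} Once the combinatorial identity is set up, I would check it against Example \ref{ex:2222}: plugging $d_1=d_2=e_1=e_2=2$ into \eqref{eq:degreeformula} gives $\binom{4}{2}((3)(3)+\tfrac12\cdot 3+\tfrac12\cdot 3)=6(9+3)=72$, matching the stated degree, which is a strong sanity check that should be included. A second independent check: when $e_1=e_2=1$ the conic coupling degenerates to ordinary transport and the formula must collapse to $0$ for $d_1,d_2\ge 2$ (the toric variety becomes a Segre cone whose convex hull with $0$ is lower-dimensional in the relevant sense) — indeed each factor $(e_i^{d_i}-1)$ vanishes, so \eqref{eq:degreeformula} gives $0$, which I would note as consistency with Remark \ref{rmk:trivial}-type degeneracy. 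The main obstacle I anticipate is bookkeeping the inclusion–exclusion correctly: the polytope $Q$ is \emph{not} simply a pyramid over a single product of simplices but rather has a ``corner'' structure where the four scaled products $k_1\Delta\times k_2\Delta$ for $(k_1,k_2)$ in $\{1,e_1\}\times\{1,e_2\}$ interact, and getting the signs and the pyramid-height normalizations consistent — especially separating the $(e_1^{d_1}-1)(e_2^{d_2}-1)$ ``bulk'' term from the two ``boundary'' terms with coefficients $\frac{d_1}{d_1+d_2}$ and $\frac{d_2}{d_1+d_2}$ — is where the real work lies. An alternative that sidesteps some of this: compute the Ehrhart polynomial of $Q$ directly as a lattice-point count over the strata $\sigma=s$, $\tau=t$ (each stratum contributing a product of two dilated-simplex point counts), sum, and read off the leading coefficient; this turns the volume computation into a polynomial-in-$(s,t)$ summation that is mechanical but long.
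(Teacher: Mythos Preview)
Your approach differs substantially from the paper's, and while it might be pushed through, several steps are left as intentions rather than arguments, and the key simplification is missed.

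The paper's proof rests on two observations you do not make. First, $\mathrm{conv}(A)$ is exactly the Cartesian product $P_{d_1,e_1}\times P_{d_2,e_2}$, where $P_{d,e}=\mathrm{conv}\{k\,\mathbf{e}_i : i\in[d],\ k\in[e]\}$ has normalized volume $e^d-1$. Multiplicativity of normalized volume for direct products (with the binomial factor $\binom{d_1+d_2}{d_1}$) then gives the bulk term $\binom{d_1+d_2}{d_1}(e_1^{d_1}-1)(e_2^{d_2}-1)$ in one line---no inclusion--exclusion, no four corners. Second, to pass from $\mathrm{conv}(A)$ to $\mathrm{conv}(A\cup 0)$, the paper identifies the two facets of $\mathrm{conv}(A)$ visible from the origin: these lie on the hyperplanes $\sigma=1$ and $\tau=1$, \emph{not} on $\sigma=e_1$ and $\tau=e_2$ where you propose to slice. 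Those visible facets are $\Delta_{d_1-1}\times P_{d_2,e_2}$ and $P_{d_1,e_1}\times\Delta_{d_2-1}$; since the origin is at lattice distance one from each hyperplane, the added volume $\mathrm{vol}(\mathrm{conv}(A\cup 0)\setminus\mathrm{conv}(A))$ equals the sum of the two facet volumes, yielding the boundary terms $\binom{d_1+d_2-1}{d_2}(e_2^{d_2}-1)+\binom{d_1+d_2-1}{d_1}(e_1^{d_1}-1)$ immediately.

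Your proposal to slice along $\sigma=e_1$ and $\tau=e_2$ does not subdivide $Q$ at all---those are facet-supporting hyperplanes of $Q$, so $Q$ lies entirely on one side of each. Your fallback inclusion--exclusion over the four $(i,j)$-corners is not specified precisely enough to evaluate: it is unclear what regions the four pyramids cover, how they overlap, or why the signs would telescope as claimed; the phrase ``something involving $k_1^{d_1-1}k_2^{d_2-1}$'' is a placeholder, not a computation. The Ehrhart alternative you mention would work but is, as you acknowledge, long. The paper's two-step decomposition (product volume plus two lattice-distance-one pyramids over the $\sigma=1$ and $\tau=1$ facets) is the intended shortcut, and it explains structurally why the coefficients $\frac{d_1}{d_1+d_2}$ and $\frac{d_2}{d_1+d_2}$ appear: they come from $\binom{d_1+d_2-1}{d_2}/\binom{d_1+d_2}{d_1}$ and its counterpart.
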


To illustrate our formula, consider the binary case $(d_1=d_2=2)$, where it gives
$\,\binom{4}{2} (9 + \frac{2}{4} 3 + \frac{2}{4} 3) = 72$, and the ternary case $(d_1=d_2=3)$, where
$\, \binom{6}{3} (26^2 + \frac{3}{6} 26 + \frac{3}{6} 26) = 14040$.

\begin{proof} We compute the volume in (\ref{eq:degzT}).
Fix integers $d,e \geq 2$ and consider the $d$-polytope
$$ P_{d,e} \,\,=\,\, {\rm conv} \bigl\{ \,k {\bf e}_i \,: \, i=1,\ldots,d \,\,{\rm and}\, \, k = 1,\ldots,e \bigr\}. $$
The normalized volume of this polytope is  $ e^d-1$. 
The convex hull of the columns of $A$ equals
$$ {\rm conv}(A) \,\, = \,\,  P_{d_1,e_1} \,\times \, P_{d_2,e_2}. $$
The normalized volume of a direct product is multiplicative up to a binomial coefficient, so
\begin{equation} \begin{matrix}
\label{eq:volconA} {\rm vol}({\rm conv}(A)) \,\, = \,\, \binom{d_1+d_2}{d_1}\, {\rm vol}(P_{d_1,e_1})\, {\rm vol}(P_{d_2,e_2})\,\,=\,\,
\,\binom{d_1+d_2}{d_1} (e_1^{d_1}-1)(e_2^{d_2}-1) .
\end{matrix} \end{equation}

This explains the first summand in (\ref{eq:degreeformula}).
It remains to determine the volume of the region ${\rm conv}(A \cup 0) \backslash {\rm conv}(A)$.
To this end, we consider the facets of ${\rm conv}(A)$ that are visible from the origin $0$.
There are precisely two such facets, and they are defined respectively by
\begin{equation}
\label{eq:2hyperplanes} y_1 + \cdots + y_{d_1} = 1
\quad {\rm and} \quad y_{d_1+1} + \cdots + y_{d_2} = 1. 
\end{equation}
These two facets are the $(d_1+d_2-1)$-dimensional polytopes
$\, \Delta_{d_1-1} \times P_{d_2,e_2} \,$ and $\,
P_{d_1,e_1} \times \Delta_{d_2-1} $.
 Since the origin has lattice distance one from the hyperplanes (\ref{eq:2hyperplanes}),
the volume of the region
 $\,{\rm conv}(A \cup 0) \backslash {\rm conv}(A)\,$ coincides with the sum of the volumes 
 of the two polytopes:
 $$  \begin{matrix} {\rm vol} \bigl( \Delta_{d_1-1} \times P_{d_2,e_2}\bigr) \,+\,
{\rm vol}\bigl(P_{d_1,e_1} \times \Delta_{d_2-1}\bigr)\,= \, 
\binom{d_1{+}d_2{-}1}{d_2} (d_2^{e_2}-1) \, + \,
\binom{d_1{+}d_2{-}1}{d_1} (d_1^{e_1}-1).  
\end{matrix} $$
This gives the last two summands in (\ref{eq:degreeformula}), and the proof is complete.
\end{proof}

\section{Computational Schemes}
\label{sec5}

We now turn to convex optimization methods for solving (\ref{eq:entropicLP}). 
Recall that $H(t) = t \cdot {\rm log}(t) - t$ and hence $H^*(s) = {\rm exp}(s)$ in 
 the dual formulation. We can solve \eqref{EqDualProblem} using
coordinate ascent, i.e.~by iteratively optimizing each variable $p_i$ in \eqref{EqDualProblem} in a cyclic order. 
In statistics, this is known as \emph{iterative proportional scaling} (IPS,  see \cite{DR, she2018iterative}).
This method converges linearly \cite{LuoTseng}. 
Randomized iterations over the $p_i$ can further improve the performance. When each one-dimensional 
 optimization is computationally cheap, this method is particularly interesting. 
 
 \begin{example}[Sinkhorn iterations] \label{ex:sinkhorn}
For classical optimal transport \eqref{eq:OT}, coordinate ascent 
is the well-known \emph{Sinkhorn algorithm} \cite{AKRS, Cuturi, KR}. It uses
  highly efficient matrix-vector products.

Writing $(f_\kappa)_{\kappa \in [d_1]}$ and $(g_\lambda)_{\lambda \in [d_2]}$ for the dual variables,
the dual OT problem \eqref{EqDualProblem} reads:
\begin{equation} {\rm Maximize} \quad
    \sum_{\kappa= 1}^{d_1} \mu_\kappa f_\kappa \,+\, \sum_{\lambda= 1}^{d_2} \nu_\lambda g_\lambda 
    \,-\, \epsilon \cdot \sum_{\kappa= 1}^{d_1}\sum_{\lambda= 1}^{d_2} 
    {\rm exp}\bigl(\, (f_\kappa + g_\lambda - c_{\kappa,\lambda})/\epsilon\, \bigr).
\end{equation}
It is easy to solve this for each variable separately. 
Equating derivatives to zero, we find
\begin{equation}\label{EqExplicitSinkhornIterates}
    f_\kappa \,\,=\,\, - \,\epsilon \cdot \log\left(\sum_{\lambda = 1}^{d_2} 
    {\rm exp}\bigl( (g_\lambda - c_{\kappa,\lambda})/\epsilon \bigr) \right) \,\,+\,\, \epsilon\cdot \log(\mu_\kappa)
   \quad \hbox{and similarly for} \,\,\,g_\lambda.
\end{equation}
Sinkhorn iteration means executing these assignments.
A useful reformulation 
 is obtained by setting 
 $F_\kappa = {\rm exp}({f_\kappa/\epsilon})$, $G_\lambda = {\rm exp}({f_\lambda/\epsilon})$, and 
$K_{\kappa,\lambda} = {\rm exp}({-c_{\kappa,\lambda}/\epsilon})$. Here $F$ is a row vector, and $G$ is a column vector. With this, the rules for updating $F$ and $G$ are
$F_\kappa = \mu_\kappa / [K \cdot G]_\kappa$ 
and 
$G_\lambda = \nu_\lambda/[ F \cdot K]_\lambda$.
The primal solution is the matrix 
 $x= {\rm diag}(F)\cdot K\cdot {\rm diag}(G)$.
These steps are highly parallelizable, so
 large-scale problems  can be solved effectively. This  explains the preference for entropic regularization 
in Remark \ref{rmk:efficiency}.
 \hfill $\diamond$   \end{example}

Coordinate ascent can be applied for any matrix $A$, but in general
there is no simple formula for the one-variable updates.
But, we can resort to non-linear optimization for this.

\begin{example}[Coordinate ascent for entropic conic transport]
The dual problem for (\ref{eq:UOT})~is
\begin{equation}
{\rm Maximize} \,\,\,
    h \,\,+\, \sum_{\kappa= 1}^{d_1} \mu_\kappa f_\kappa \,+\,
     \sum_{\lambda= 1}^{d_2} \nu_\lambda g_\lambda - \epsilon 
    \cdot \sum_{\kappa,\lambda,i,j} {\rm exp}\bigl( \,(h + if_\kappa \,
    +\, jg_\lambda - c_{\kappa,i,\lambda,j})/\epsilon\, \bigr).
\end{equation}
Here we also assumed \eqref{eq:sumtoone},
and $h$ is the dual variable for that normalization constraint.
Coordinate ascent means that we compute,
for each $\kappa$, the unique positive solution $F_\kappa$ to
\begin{equation} \label{eq:univpol} 
\sum_{i = 1}^{d_1} i \cdot \gamma_{\kappa,i} \cdot (F_\kappa)^i \,=\, \mu_\kappa \,,
\end{equation}
where $\gamma_{\kappa,i} = \sum_{\lambda,j} 
{\rm exp} \bigl( (h + jg_\lambda - c_{\kappa,i,\lambda,j})/\epsilon\bigr)$.
 This step is more costly than applying \eqref{EqExplicitSinkhornIterates}.
 \hfill $\diamond$   \end{example}

Solving \eqref{eq:univpol} is costly. One prefers cheap iterations, inspired by first-order methods.
 Of special interest is the Darroch-Ratcliff algorithm \cite{DR}, 
  which is also known as \emph{generalized iterative scaling} (GIS).
  This was   recognized 
   in \cite{she2018iterative} as an instance of majorization-minimization on the dual formulation \eqref{EqDualProblem}. 
GIS is a remarkably simple iterative process. As with Sinkhorn, each step involves $d$ matrix-vector products.
See  \cite[Figure 4]{AKRS} for the connection.
Theorem~\ref{thm:DR} below shows that GIS can be used\footnote{An illustration of entropic conic unbalanced OT, for numerical comparison between GIS, IPS and general purpose convex optimization, is implemented
at \url{https://github.com/fxv27/EntropicConicUOT}} effectively
for conic coupling~\eqref{eq:UOT}-\eqref{eq:sumtoone}.

Before starting the iteration, we modify $A, b$ and $c$  slightly. To match
   \cite{DR}, we formulate an equivalent linear program where all columns of $A$ have the same sum.
For this conversion, we require that the all-ones vector $(1, \ldots, 1)$ is in the row space $L_A$. In geometric terms, this means that $T_A$ is the affine cone over a projective toric variety. The matrix $A$ for classical OT satisfies this assumption. In the unbalanced case, it holds after  we add the constraint~\eqref{eq:sumtoone}. 

We  now assume $(1, \ldots, 1) \in L_A$. Fix $b \in {\rm pos}(A)$.
Then  $s = \sum_{i=1}^n x_i$ is fixed for $x \in P_{A,b}$.
Let $a$ be the maximum among the column sums of $A$. To each column $a_j$, we append the entry $a_{d+1,j} = a - |a_j|$, where $|a_j| = \sum_{i = 1}^d a_{ij}$.
Prepending the column $(0, \ldots, 0, a)$, we obtain
\[ {\cal A}\, \,=\, \begin{bmatrix}
\,\,0 & & A & & \\
\,\,a & a_{d+1,1} & \cdots & a_{d+1,n}
\end{bmatrix} \quad \in\,\, \NN^{(d+1) \times (n+1)} .\]
Note that the entries in each column of ${\cal A}$
sum to $a$. Let $s_c = 1 + \sum_{i=1}^n \exp(-c_i/\epsilon) $, and
\[ \beta \,=\, \left( \,\frac{b}{ s + 1 } \,,\,\, a -  \frac{|b|}{ s + 1 } \, \right)^{\! \top} \quad \text{and} \quad \gamma 
\,=\, \bigl( \epsilon  \log(s_c), \, c_1 + \epsilon \log(s_c), \, \ldots, \, c_n + \epsilon \log(s_c) \bigr). \]
These data define the following variant of the regularized linear program (\ref{eq:entropicLP}):
\begin{equation} \label{eq:modifiedLP}
{\rm Minimize}\, \,\,\gamma \cdot y \,+\, \epsilon \sum_{i=0}^n H(y_i) \,\,\,\,\hbox{subject to} \,\,\,{\cal A} \, y = \beta
\,\,{\rm and} \,\, y \geq 0 . 
\end{equation}
We now rephrase the
result of Darroch and Ratcliff \cite{DR}
in the geometric setting of
 Section \ref{sec2}.
An essentially equivalent formulation was presented
recently in \cite[Proposition 5.1]{AKRS}.

\begin{theorem} \label{thm:DR}
If  \eqref{eq:entropicLP} is feasible, then the solution $x^*(\epsilon)$ is given by $(y_1/y_0, \ldots, y_n/y_0)$, where $y = y^*(\epsilon) \in \RR^{n+1}_{\geq 0}$ is the unique solution to \eqref{eq:modifiedLP}. That is, $y$ is the unique 
point in $T_{\cal A, \gamma,\epsilon}^+ \cap \{ {\cal A} \, y = \beta \}$. It satisfies $\sum_{i=0}^n y_i = 1$ and is obtained as the unique limit point of  the iteration
\begin{equation}
\label{eq:IPF} y^{(0)} \,=\, z^{\gamma} := \exp(-\gamma/\epsilon), \qquad y_i^{(k+1)} 
\,=\, y_i^{(k)} \left( \frac{\beta^{a_i}}{(\mathcal{A} \, y^{(k)})^{a_i}} \right)^{\frac{1}{a}} \qquad
\hbox{for $k \rightarrow \infty$.} 
\end{equation}
\end{theorem}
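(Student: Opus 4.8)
The plan is to identify the iteration \eqref{eq:IPF} as the Darroch--Ratcliff \emph{generalized iterative scaling} procedure applied to the modified program \eqref{eq:modifiedLP}, and to verify the three assertions separately: (i) that solving \eqref{eq:modifiedLP} is equivalent to solving \eqref{eq:entropicLP}, via the substitution $x_i = y_i/y_0$; (ii) that the unique positive solution of \eqref{eq:modifiedLP} is the claimed point of $T^+_{\mathcal A,\gamma,\epsilon}\cap\{\mathcal A y=\beta\}$ with $\sum_i y_i = 1$; and (iii) that the iteration \eqref{eq:IPF} converges to it. For (i) I would work from the dual \eqref{EqDualProblem}. The key algebraic observation is that every column of $\mathcal A$ has the same sum $a$, so appending the row enforcing this, together with the column $(0,\dots,0,a)$, introduces exactly one new coordinate $y_0$ whose role is to absorb the normalization. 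Writing the KKT/Lagrange conditions for \eqref{eq:modifiedLP} with $H(t)=t\log t - t$ and eliminating the multiplier attached to the new row, one recovers precisely the rank condition \eqref{eq:curveconstraints} (equivalently \eqref{eq:birchconstraints}) for $x=(y_1/y_0,\dots,y_n/y_0)$; the constant shift by $\epsilon\log(s_c)$ built into $\gamma$, and the rescaling of $b$ by $s+1$ built into $\beta$, are exactly what is needed to make the normalization $\sum_{i=0}^n y_i = 1$ consistent with $\mathcal A y = \beta$. Strict convexity of the entropic objective (used already in the proof of Theorem~\ref{thm:homeolambda}) gives uniqueness of the positive critical point, and Theorem~\ref{thm:homeolambda} applied to $\mathcal A$ in place of $A$ identifies that point as $T^+_{\mathcal A,\gamma,\epsilon}\cap\{\mathcal A y=\beta\}$, settling (ii) once $\beta\in\mathrm{pos}(\mathcal A)$ is checked — which follows because $b\in\mathrm{pos}(A)$ and the extra coordinate was defined precisely to land on the hyperplane of column sums.

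For (iii), I would invoke the convergence theorem of Darroch and Ratcliff \cite{DR} essentially as a black box, after checking that \eqref{eq:IPF} is literally their update in our notation. The initialization $y^{(0)} = z^{\gamma} = \exp(-\gamma/\epsilon)$ is the point of $T_{\mathcal A,\gamma,\epsilon}$ lying over $b=0$ (cf.~\eqref{eq:zcTA}, with $z=\exp(-1/\epsilon)$), i.e.~the natural base point of the scaled toric variety; the multiplicative update $y_i \mapsto y_i\bigl(\beta^{a_i}/(\mathcal A y)^{a_i}\bigr)^{1/a}$ preserves membership in $T_{\mathcal A,\gamma,\epsilon}$ because each step multiplies the coordinate vector by $\exp(v)$ for some $v$ in the row space of $\mathcal A$ — here I would record that $\frac1a\sum_i a_i\log(\beta_i/(\mathcal A y)_i)$ is an $L_{\mathcal A}$-combination — so the whole trajectory stays on the toric variety and can only converge to its intersection with $\{\mathcal A y=\beta\}$. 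Darroch--Ratcliff guarantees that the iteration is well-defined (the bracket is positive throughout, since $\mathcal A$ is nonnegative with no zero column and $\beta>0$), monotonically decreases the Kullback--Leibler divergence to the target, and converges to the unique point of $T^+_{\mathcal A,\gamma,\epsilon}\cap\{\mathcal A y=\beta\}$; the normalization $a$ in the exponent is exactly the common column sum, which is what makes their majorization argument go through. Feasibility of \eqref{eq:entropicLP} is the hypothesis that makes $\{\mathcal A y = \beta\}\cap T^+_{\mathcal A,\gamma,\epsilon}$ nonempty.

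The main obstacle I anticipate is part (i): getting the bookkeeping exactly right so that the normalized solution of \eqref{eq:modifiedLP} maps onto the solution of the \emph{original} \eqref{eq:entropicLP} rather than some rescaled variant. One must track how the constant $s=\sum_i x_i$ (fixed on $P_{A,b}$ because $(1,\dots,1)\in L_A$), the offset $\epsilon\log(s_c)$ in $\gamma$, and the division by $s+1$ in $\beta$ interact; the cleanest route is to write down the optimality equation for \eqref{eq:modifiedLP} in the form $\gamma_i + \epsilon\log y_i = (\mathcal A^\top q)_i$, exponentiate, and check that the ratios $y_i/y_0$ satisfy the optimality equation for \eqref{eq:entropicLP} with the \emph{same} $b$ and $c$ after the constants cancel, and simultaneously that $\sum_{i=0}^n y_i = 1$ is forced. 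Once this dictionary is verified, the remaining pieces — toric-variety membership of the iterates, and convergence — follow from Theorem~\ref{thm:homeolambda} and \cite{DR} respectively with no further difficulty. A secondary point worth a sentence is why one prefers \eqref{eq:modifiedLP} at all: it is precisely the reduction that turns coordinate/scaling updates into the uniform-exponent form $(\cdot)^{1/a}$, which is the hypothesis under which GIS is known to converge.
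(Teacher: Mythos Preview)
Your proposal is correct and follows the same overall strategy as the paper: establish the correspondence between \eqref{eq:entropicLP} and \eqref{eq:modifiedLP}, invoke Theorem~\ref{thm:homeolambda} (applied to $\mathcal A$) for uniqueness of the intersection point, and cite Darroch--Ratcliff \cite{DR} for convergence of the iteration. The one notable difference lies in how part (i) is handled. The paper does this via a commutative diagram rather than KKT conditions: it writes down the explicit embedding $\iota:(x_1,\dots,x_n)\mapsto \frac{1}{|x|+1}(1,x_1,\dots,x_n)$, notes that $\sum_i\beta_i=a$ forces $\sum_i y_i=1$, and observes that $\iota$ carries $T^+_{A,c,\epsilon}$ into $T^+_{\mathcal A,\gamma,\epsilon}\cap\Delta_n$ compatibly with the map $b\mapsto\beta$ between ${\rm pos}(A)$ and ${\rm conv}(\mathcal A)$; commutativity then gives $y^*(\epsilon)=\iota(x^*(\epsilon))$ at once. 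Your route through the optimality equations $\gamma_i+\epsilon\log y_i=(\mathcal A^\top q)_i$ is equivalent but more computational; the diagram simply packages the bookkeeping you anticipate (the constants $s$, $s_c$, and the division by $s+1$) into the single map $\iota$. Your additional remark that each GIS step multiplies coordinatewise by $\exp(v)$ for some $v\in L_{\mathcal A}$, so the trajectory stays on $T_{\mathcal A,\gamma,\epsilon}$, is a nice geometric observation the paper leaves implicit. One small inaccuracy: $y^{(0)}=z^\gamma$ is indeed the natural base point of the scaled toric variety, but it does not ``lie over $b=0$'' under $\mathcal A$; this does not affect the argument.
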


\begin{proof}
Since $\sum_{i = 0}^n \beta_i = a$, every solution $y$ to \eqref{eq:modifiedLP} satisfies
 $\sum_{i=0}^n y_i = 1$. Consider the map $\iota: (x_1, \ldots, x_n) \mapsto \frac{1}{|x|+1}(1, x_1, \ldots, x_n)$. 
  The map sending $b$ to $\beta$ is such that the diagram 
\begin{center}
\begin{tikzcd}
T_{A,c,\epsilon}^+\ar[r,hookrightarrow,"\iota"] \ar[d] & T_{{\cal A}, \gamma, \epsilon}^+ \cap \Delta_n \ar[d] \\
     {\rm pos}(A) \ar[r, "b \, \mapsto \beta"] & {\rm conv}(\mathcal{A})
\end{tikzcd}
\end{center}
is commutative. Here the vertical maps correspond to the isomorphism $T_{A,c,\epsilon}^+ \simeq {\rm pos}(A)$ in Theorem \ref{thm:homeolambda}. The diagram shows that \eqref{eq:modifiedLP} has 
the solution $y = y^*(\epsilon) = \iota(x^*(\epsilon))$. The iteration \eqref{eq:IPF} and its convergence can be derived from the proof of 
\cite[Theorem 1]{DR}.
\end{proof}

The geometric interpretation of Theorem \ref{thm:DR} is shown in Figure \ref{fig:DR}.
  The linear map given by $\mathcal{A}$ sends the 
probability simplex $\Delta_n$ onto
the polytope ${\rm conv}(\mathcal{A})$. Note that $z^{\gamma} $ lies in $ \Delta_n$.
The polytope $P_{{\cal A},\beta}$ is the set of all points in $\Delta_n$ that map to $\beta \in {\rm conv}(\mathcal{A})$ under $\mathcal{A}$. It is shown as a green triangle.
The toric variety $T_{{\cal A},\gamma,\epsilon} $ 
inside $\Delta_n$ is shown in blue, and ${\rm conv}(\mathcal{A})$ is the red line segment. The point $z^{\gamma} = y^{(0)}$ lies on 
$T_{{\cal A},\gamma,\epsilon} $ and is updated throughout the iteration. The solution $y = y^*(\epsilon) =\lim_{k \rightarrow \infty} y^{(k)}$ to \eqref{eq:modifiedLP} is the unique point in 
$T_{{\cal A},\gamma,\epsilon} \cap P_{{\cal A},\beta}$.

\begin{figure}[h]
\centering
\includegraphics[scale=0.35]{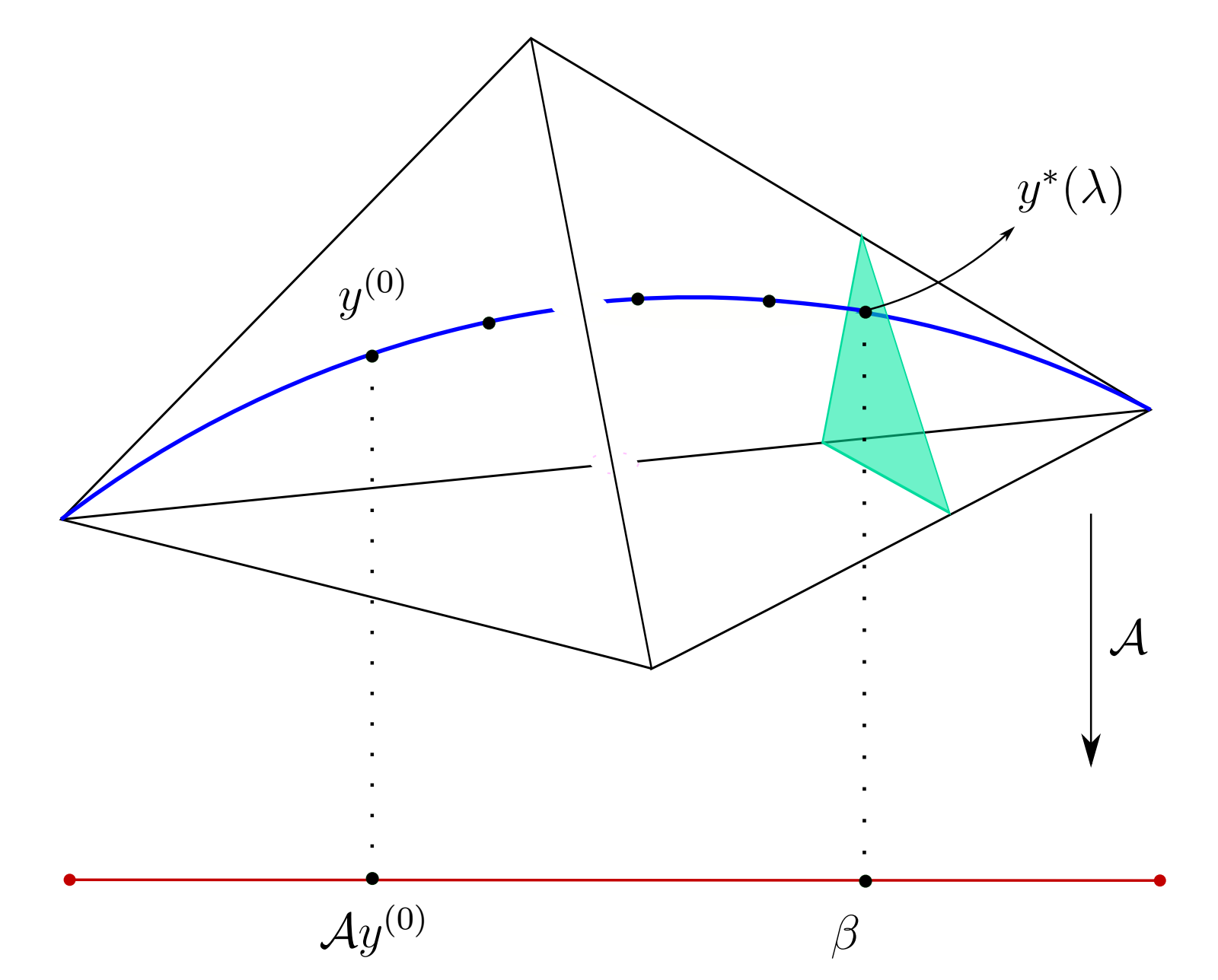}                                   
\caption{Illustration of the GIS algorithm from Theorem \ref{thm:DR}.}
\label{fig:DR}
\end{figure}

\smallskip

We now turn to the second step of the entropic interior point method, which consists of tracking $x^*(\epsilon)$ to the optimal vertex $x^*(0)$ of $P_{A,b}$. We assume that $c$ is sufficiently generic, so that $x^*(0)$ is indeed a vertex. Observe that, for all $\mu \in (0, \epsilon]$, we have $x^*(\mu) > 0$, $A \, x^*(\mu) = b$ and $x^*(\mu) \in T_{A,c,\mu}$. Equivalently, $x^*(\mu) = (t(\mu)^{a_j})_{j = 1, \ldots, n}$, where $t(\mu) \in \RR_{>0}^d$ is such that 
\begin{equation}  \label{eq:homotopystep2}
\sum_{j=1}^n a_{ij} \, \exp(-c_j/\mu) \,  t(\mu)^{a_j} \,=\, b_i \quad \text{for } \,i = 1, \ldots, d.
\end{equation}
The resulting functions $t(\mu)^{a_j}$ parametrize the entropic path
for $ \mu \in (0, \epsilon]$.
The starting 
point $t(\epsilon)$ is found by solving the 
binomial equations $x^*(\epsilon)_j = t(\epsilon)^{a_j} ,\, j = 1, \ldots n$. This can be done by a Smith normal form computation. The tracking for $\mu \rightarrow 0^+$ is carried out with standard \emph{predictor-corrector} techniques from numerical homotopy continuation \cite[Section~2.3]{sommese2005numerical}. 

We conclude with a toric interpretation of the homotopy \eqref{eq:homotopystep2}. For $\mu = \epsilon >0$, each of the Laurent polynomials in \eqref{eq:homotopystep2} defines a hypersurface in the projective toric variety $Y_P$ associated to the polytope $P = {\rm conv}(A \cup 0)$. There are ${\rm vol}(P)$ 
many solutions to \eqref{eq:homotopystep2} in $Y_P$, one of which gives $x^*(\epsilon)$. For $\mu \rightarrow 0$, this positive solution drifts to a lower dimensional torus orbit in $Y_P$, indicating which inequalities in $x^*(0) \geq 0$ are active. Identifying this orbit can be done by tracking the homotopy path $t(\mu)$ in homogeneous coordinates on $Y_P$.

\bigskip

\noindent
\footnotesize
{\bf Authors' addresses:}

\smallskip

\noindent Bernd Sturmfels,
MPI-MiS Leipzig and UC Berkeley
\hfill {\tt bernd@mis.mpg.de}

\noindent Simon Telen, 
 MPI-MiS Leipzig and CWI Amsterdam (current)
\hfill {\tt simon.telen@mis.mpg.de}

\noindent  Fran\c{c}ois-Xavier Vialard, LIGM,
Universit\'e Gustave Eiffel and INRIA Paris

\hfill {\tt francois-xavier.vialard@univ-eiffel.fr}

\noindent 
Max von~Renesse, Universit\"at Leipzig
\hfill {\tt renesse@uni-leipzig.de}


\begin{thebibliography}{10}
\begin{small}
\setlength{\itemsep}{-0.6mm}

\bibitem{Alla} X.~Allamigeon,
S.~Gaubert, A.~Aznag and Y.~Hamdi:
{\em The tropicalization of the entropic barrier},
{\tt arXiv:2010.10205}.

\bibitem{ABGJ} X.~Allamigeon, P.~Benchimol, S.~Gaubert
and M.~Joswig: {\em Log-barrier interior point methods are not strongly polynomial},
 SIAM J.~Appl.~Algebra Geom. {\bf 2} (2018) 140--178. 

 \bibitem{AKRS}
 C.~Am\'endola, K.~Kohn, P.~Reichenbach and A.~Seigal:
 {\em Toric invariant theory for maximum likelihood estimation in log-linear models},
 Algebraic Statistics {\bf 12} (2021) 187--211.



 
 \bibitem{CPSV} L.~Chizat, G.~Peyr\'e, B.~Schmitzer and F.-X.~Vialard:
{\em Scaling algorithms for unbalanced optimal transport problems},
Mathematics of Computation {\bf 87} (2018) 2563--2609.

\bibitem{CPSV2} L.~Chizat, G.~Peyr\'e, B.~Schmitzer and F.-X.~Vialard:
{\em Unbalanced optimal transport: dynamic and Kantorovich formulations},
Journal of Functional Analysis {\bf 274} (2018) 3090--3123.

\bibitem{Cuturi} M.~Cuturi: {\em
Sinkhorn distances: lightspeed computation of optimal transport},
Advances in Neural Information Processing Systems {\bf 26} (NIPS 2013).

\bibitem{DR}
J.~Darroch and D.~Ratcliff:
{\em Generalized iterative scaling for log-linear models},
Ann. Math. Statist. {\bf 43} (1972) 1470--1480.

 \bibitem{DRS}  
 J.~De Loera,   J.~Rambau and F.~Santos:
{\em Triangulations: Structures for Algorithms and Applications},
Algorithms and Computation in Mathematics, {\bf 25}, Springer, Berlin, 2010. 

\bibitem{DSV}
J.~De Loera, B.~Sturmfels and C.~Vinzant: {\em The central curve in linear programming},
   Foundations of Computational Mathematics  {\bf 12} (2012) 509--540. 
   
 \bibitem{DM} Y.~Dolinsky and H.~Mete Soner:
{\em Martingale optimal transport and robust hedging in continuous time},
Probab.~Theory Related Fields {\bf 160} (2014) 391--427.


 \bibitem{GO} G.~Guo and J.~Ob\l\'{o}j:
{\em Computational methods for martingale optimal transport problems},
Ann.~Appl. Probab. {\bf 29} (2019) 3311--3347.


\bibitem{KR}
J.~Karlsson and A.~Ringh: {\em Sinkhorn iterations for regularizing inverse problems
using optimal mass transport}, SIAM J.~Imaging Sciences {\bf 10} (2017) 1935--1962.

\bibitem{MR3763404}
M.~Liero, A.~Mielke  and G.~Savar\'{e}:
{\em Optimal entropy-transport problems and a new
  {H}ellinger-{K}antorovich distance between positive measures},
Invent. Math.~{\bf 211} (2018) 969--1117.


\bibitem{LuoTseng}  Z.~Luo and P.~Tseng:
{\em On the convergence of the coordinate descent method for convex differentiable minimization},
Journal of Optimization Theory and Applications {\bf 72} (1992) 7--35.

\bibitem{ASCB} L.~Pachter and B.~Sturmfels:
{\em Algebraic Statistics for Computational Biology},
Cambridge University Press, 2005.


\bibitem{she2018iterative} Y.~She and S.~Tang:
{\em Iterative proportional scaling revisited: a modern optimization perspective},
Journal of Computational and Graphical Statistics {\bf 28} (2019) 48--60.

\bibitem{sommese2005numerical}
A.~Sommese and C.~Wampler:
{\em The Numerical Solution of Systems of Polynomials Arising in
  Engineering and Science},
World Scientific Publishing, Hackensack, 2005.


\bibitem{GBCP} B.~Sturmfels: {\em Gr\"obner Bases and Convex Polytopes},
American Mathematical Society, Univ. Lectures Series, No 8, Providence, Rhode Island, 1996. 

\bibitem{StuTel} B.~Sturmfels and S.~Telen:
{\em Likelihood equations and scattering amplitudes},
Algebraic Statistics {\bf 12} (2021) 167--186.
 

\bibitem{Weed} J.~Weed: {\em An explicit analysis of the entropic penalty in
linear programming}, 31st Annual Conf.~on Learning Theory,
Proceedings~of Machine Learning Research
{\bf 75} (2018) 1--15.


 \end{small}
\end{thebibliography}
\end{document}